\newtheorem{theorem}{Theorem}[section]
\newtheorem{lemma}[theorem]{Lemma}
\newtheorem{proposition}[theorem]{Proposition}
\newtheorem{defn}[theorem]{Definition}
\newtheorem{example}[theorem]{Example}
\newcommand\robout{\bgroup\markoverwith {\textcolor{blue}{\rule[0.5ex]{2pt}{0.4pt}}}\ULon}
\newtheorem{lthm}{Theorem} 
\theoremstyle{remark}
\newtheorem{remark}[theorem]{Remark}
\newcommand{\mylabel}[2]{#2\def\@currentlabel{#2}\label{#1}}
\newcommand{\Gal}{\mathrm{Gal}}
\newcommand{\BSymb}{\mathrm{BSymb}}
\newcommand{\Hom}{\mathrm{Hom}}
\newcommand{\Symb}{\mathrm{Symb}}
\newcommand{\cG}{\mathcal{G}}
\newcommand{\SL}{\mathrm{SL}}
\newcommand{\cor}{\mathrm{cor}}
\newcommand{\ord}{\mathrm{ord}}
\newcommand{\ZZ}{\mathbb{Z}}
\newcommand{\CC}{\mathbb{C}}
\newcommand{\QQ}{\mathbb{Q}}
\newcommand{\Qp}{\mathbb{Q}_p}
\newcommand{\Fp}{\mathbb{F}_p}
\newcommand{\Zp}{\ZZ_p}
\newcommand{\TT}{\mathbb{T}}
\newcommand{\mm}{\mathfrak{m}}
\newcommand{\cO}{\mathcal{O}}
\DeclareSymbolFont{cyrletters}{OT2}{wncyr}{m}{n}
\DeclareMathSymbol{\Sha}{\mathalpha}{cyrletters}{"58}
\DeclareMathSymbol\dDelta  \mathord{bbold}{"01}
\definecolor{Green}{rgb}{0.0, 0.5, 0.0}
\numberwithin{equation}{section}
\author{Antonio Lei}
\address{Antonio Lei\newline Department of Mathematics and Statistics\\University of Ottawa\\
150 Louis-Pasteur Pvt\\
Ottawa, ON\\
Canada K1N 6N5}
\email{antonio.lei@uottawa.ca}
\author{Robert Pollack}
\address{Robert Pollack\newline Department of Mathematics\\The University of Arizona\\617 N. Santa Rita Ave. \\
Tucson\\ AZ 85721-0089\\USA}
\email{rpollack@arizona.edu}
\author{Naman Pratap}
\address{Naman Pratap\newline Centre for Mathematical Sciences\\ University of Cambridge\\
Wilberforce Road,
Cambridge \\ CB3 0WA,
United Kingdom}
\email{np637@cam.ac.uk}
\subjclass[2020]{11R23}
\keywords{elliptic curves, Iwasawa invariants, Mazur--Tate elements}
\begin{document}

\begin{abstract}
Let $E$ be an elliptic curve with good ordinary reduction at an odd prime $p$. Assuming that Greenberg's $\mu=0$ conjecture holds, we show that the $\lambda$-invariants of the Mazur--Tate elements attached to $E$ either stabilise to the $\lambda$-invariant of the $p$-adic $L$-function or they attain the largest possible value at all finite levels. We characterise the latter phenomenon:\ it occurs if and only if  $\ord_p\left(\frac{L(E',1)}{\Omega_{E'}}\right)$ is negative for some $E'$ that is isogenous to $E$.  Furthermore, we relate this condition to congruences with boundary symbols coming from Eisenstein series. We also study the extension of these results to Hecke eigenforms of weight two.
\end{abstract}
\title[]{On the maximality of the $\lambda$-invariants of Mazur--Tate elements}

\maketitle
\section{Introduction}\label{sec: intro2}
Let $E$ be an elliptic curve defined over $\QQ$ and let $f_E$ be the weight two cusp form of level $N_E$ attached to $E$. In this article, we are interested in the modular elements of Mazur and Tate attached to $E$ introduced in \cite{MT}, which we call Mazur--Tate elements in this article. These are defined as elements of group rings of the form $\QQ[\Gal(M/\QQ)]$, where $M$ is an abelian extension of $\QQ$. Let $p$ be an odd prime. We shall focus on the $p$-adic Mazur--Tate elements $\theta_n(E)$, which interpolate the twisted $L$-values $L(E,\chi,1)$ for Dirichlet characters $\chi$ of $p$-power conductor. 

Let $k_n$ denote the unique sub-extension of $\QQ(\mu_{p^\infty})/\QQ$ that is of degree $p^n$. We shall regard the Mazur--Tate element $\theta_n(E)$ for the extension $k_n/\QQ$ as an element of $\Qp[\Gal(k_n/\QQ)]$. One can define $\mu$- and $\lambda$-invariants of elements of $\Qp[\Gal(k_n/\QQ)]$ as one does for elements of the Iwasawa algebra (see Definition~\ref{def:inv}).
The $\lambda$-invariant of $\theta_n(E)$ can be arbitrarily large, as illustrated by the following example.

\begin{example}\label{example: X0(11)}
    Consider $E=X_0(11)$ and $p=5$. For $n \geq 0$,
$$
\mu(\theta_n(E))=0 \quad \text{and} \quad \lambda(\theta_n(E))=p^n-1. 
$$
This behaviour can be explained by the existence of a congruence between the modular symbol of $E$ with a boundary symbol (a modular symbol corresponding to an Eisenstein series, see \S\ref{sec:boundary}; in particular, Remark~\ref{rk:boundary}) modulo $p$, which leads to the above expression for the $\lambda$-invariants. 
\end{example}
Note that the $\lambda$-invariant of a non-zero element in $\Qp[\Gal(k_n/\QQ)]$ is at most $p^n-1$. We shall refer to the property $\lambda(\theta_n(E))=p^n-1$ as the \emph{maximality} of the $\lambda$-invariants of $\theta_n(E)$. Note that this phenomenon is expected to occur only when the $\Gal(\overline{\QQ}/\QQ)$-module $E[p]$ is reducible, as when $E[p]$ is irreducible, we have:
\begin{theorem}\label{thm: PW11 irreducible thm}
Let $E/\QQ$ be an elliptic curve with good ordinary reduction at $p$ such that $E[p]$ is irreducible as a $\Gal(\overline{\QQ}/\QQ)$-module. Let $L_p(E)$ be the $p$-adic $L$-function of Mazur--Swinnerton-Dyer attached to $E$ constructed in \cite{MSD74}. If $\mu(L_p(E)) = 0$, then, for $n$ large enough
$$
\mu(\theta_n(E)) = 0 \quad \text{and} \quad \lambda(\theta_n(E)) = \lambda(L_p(E)). 
$$
\end{theorem}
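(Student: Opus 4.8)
The plan is to compare $\theta_n(E)$ directly with the image of $L_p(E)$ in the finite group ring and to read off both invariants from their common reduction modulo $p$. Write $G_n=\Gal(k_n/\QQ)$, fix the identification $\Zp[G_n]\cong\Zp[T]/(\omega_n(T))$ with $\omega_n(T)=(1+T)^{p^n}-1$ used in Definition~\ref{def:inv}, and let $\pi_n\colon\Lambda\to\Zp[G_n]$ be the natural projection, where $\Lambda=\Zp[[\Gal(k_\infty/\QQ)]]$. Since $E$ has good ordinary reduction, the Hecke polynomial at $p$ has a unit root $\alpha\in\Zp^\times$, and the construction of $L_p(E)$ in \cite{MSD74} realises it as the $p$-adic limit of the $\alpha$-stabilised Mazur--Tate elements. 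Concretely, the three-term ($U_p$-)relation satisfied by the $\theta_n(E)$ of \cite{MT} yields, after $\alpha$-stabilisation, an identity of the shape
\begin{equation}\label{eq:comparison-proposal}
\theta_n(E)=\alpha^{n}\,\pi_n\bigl(L_p(E)\bigr)+\alpha^{-1}\,\nu_n\bigl(\theta_{n-1}(E)\bigr),
\end{equation}
where $\nu_n\colon\Zp[G_{n-1}]\to\Zp[G_n]$ is the norm (trace) map, whose image lies in $(\omega_n/\omega_{n-1})\Zp[G_n]$. The first step is to make \eqref{eq:comparison-proposal} precise, checking in particular that both sides are normalised by the same (Néron) period.

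Granting \eqref{eq:comparison-proposal}, the invariants are controlled by reduction modulo $p$. Multiplication by $\alpha^{\pm n}$ preserves $\mu$ and $\lambda$ because $\alpha$ is a unit. Since $\mu(L_p(E))=0$, Weierstrass preparation gives $L_p(E)\equiv T^{\lambda}u(T)\pmod{p}$ with $\lambda=\lambda(L_p(E))$ and $u(T)\in\Fp[[T]]^\times$; reducing modulo $\omega_n(T)\equiv T^{p^n}\pmod p$ shows that, as soon as $p^n>\lambda$, one has $\mu(\pi_n(L_p(E)))=0$ and $\lambda(\pi_n(L_p(E)))=\lambda$. For the correction term, $\nu_n\equiv\omega_n/\omega_{n-1}\equiv T^{\,p^{n}-p^{n-1}}\pmod p$, so $\nu_n(\theta_{n-1}(E))$ vanishes modulo $p$ to order at least $p^{n-1}(p-1)$. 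For $n$ large this exceeds $\lambda$, so the lowest-degree term of $\overline{\theta_n(E)}\in\Fp[T]/(T^{p^n})$ is contributed entirely by $\alpha^{n}\pi_n(L_p(E))$. Hence $\overline{\theta_n(E)}$ is non-zero with $T$-adic valuation exactly $\lambda$, which forces $\mu(\theta_n(E))=0$ and then $\lambda(\theta_n(E))=\lambda=\lambda(L_p(E))$.

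The role of the hypothesis that $E[p]$ is irreducible is to legitimise the comparison \eqref{eq:comparison-proposal} integrally: it guarantees that the modular symbol attached to $E$, normalised by the Néron period, is $p$-adically primitive, in the sense that its reduction modulo $p$ is non-zero and is \emph{not} congruent to a boundary symbol as in \S\ref{sec:boundary}. This ensures both that $\theta_n(E)$ and $\pi_n(L_p(E))$ are normalised by the same period up to a $p$-adic unit, and that the correction term in \eqref{eq:comparison-proposal} is a genuine norm contribution carrying no low-degree Eisenstein part that could lower the $T$-adic valuation. When $E[p]$ is reducible this primitivity can fail: a congruence with a boundary symbol modulo $p$ produces exactly the Eisenstein behaviour of Example~\ref{example: X0(11)} and the maximal value $\lambda(\theta_n(E))=p^n-1$. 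I expect the main obstacle to be precisely this integral input—pinning down \eqref{eq:comparison-proposal} with the correct period normalisation and verifying that the norm correction contributes only in high $T$-degree modulo $p$, uniformly in $n$. A cleaner route to the same input is to work with the $\Lambda$-adic modular symbol of Pollack--Stevens, whose specialisations recover the $\theta_n(E)$ and whose image is $L_p(E)$; irreducibility of $E[p]$ then says this $\Lambda$-adic symbol generates a free rank-one module, which makes the comparison and the control of the error term transparent.
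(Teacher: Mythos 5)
Your skeleton is the right one and is essentially the paper's (i.e.\ the argument of \cite[Proposition 3.7]{PW}, reproduced here as Theorem~\ref{thm: mt invariants are Lp invariants}): the three-term relation \eqref{eq: pstabilised MT elts} identifies $\theta_n(E)$ with $\alpha^{n+1}\pi_n(L_p(E))$ plus a corestriction term, the projection of $L_p(E)$ has $(\mu,\lambda)=(0,\lambda(L_p(E)))$ once $p^n>\lambda(L_p(E))$, and the corestriction term only contributes in $T$-degree at least $p^n-p^{n-1}$ by Lemma~\ref{lem: corlambda}(2). (The exponent in your \eqref{eq:comparison-proposal} should be $\alpha^{n+1}$ with the paper's normalisation of $L_p$, but this is immaterial since $\alpha$ is a unit.)

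However, there is a genuine gap at the step ``reduce modulo $p$ and compare $T$-adic valuations'': this requires $\theta_{n-1}(E)\in\Zp[G_{n-1}]$, equivalently $\mu(\theta_{n-1}(E))\ge 0$, and you never establish it. This is not a removable technicality — it is the crux. If $\mu(\theta_{n-1}(E))<0$, the corestriction term has negative $\mu$, it dominates the unit-$\mu$ term $\alpha^{n+1}\pi_n(L_p(E))$, and one lands in case (b) of Theorem~\ref{thm: classify} with $\lambda(\theta_n(E))=p^n-1$; so the conclusion literally fails without integrality. Your stated mechanism for how irreducibility of $E[p]$ enters (``not congruent to a boundary symbol'', ``$p$-adic primitivity'') is not the operative one and does not by itself yield integrality of the \emph{N\'eron-normalised} symbol. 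What irreducibility actually buys is that the N\'eron periods $\Omega_E^{\pm}$ agree with the cohomological periods $\Omega_{f_E}^{\pm}$ up to $p$-adic units, so that $\phi_E$ is $\Zp$-valued and hence $\theta_n(E)\in\Lambda_n$ for all $n$; this is precisely the hypothesis under which Theorem~\ref{thm: mt invariants are Lp invariants} runs, and it is the content the paper imports by citing \cite{PW}, where the symbols are normalised cohomologically from the outset. To complete your proof you must supply this period comparison (or work with $\phi_{E,\mathrm{Coh}}$ throughout and then compare periods at the end); once $\theta_n(E)$ is known to be integral, your valuation count goes through and agrees with the paper's.
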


\begin{proof}
See \cite[Proposition 3.7]{PW}.
\end{proof}
Greenberg's $\mu=0$ conjecture predicts that $\mu(L_p(E)) = 0$ when $E[p]$ is an irreducible $\Gal(\overline{\QQ}/\QQ)$-module. In other words, we expect that $\lambda(\theta_n(E))$ should always be bounded in this case. See also Theorem~\ref{thm: mt invariants are Lp invariants} for a slight generalization of Theorem~\ref{thm: PW11 irreducible thm}, where we relax the hypothesis on the irreducibility of $E[p]$.

One of the main goals of this article is to characterise the Iwasawa invariants of $\theta_n(E)$ at good ordinary primes $p$ for $n\gg0$ when $E[p]$ is reducible. In fact, we show that the previous two kinds of behaviour, those exhibited in Example \ref{example: X0(11)} and Theorem \ref{thm: PW11 irreducible thm}, are the only possibilities for the Iwasawa invariants of $\theta_n(E)$. Furthermore, our result characterises exactly when these two behaviours happen in terms of the $p$-adic valuation of the normalised $L$-value at $s=1$.

\subsection{Main results}
In this section, we describe the main results of this article. First, we show that the maximality of the Iwasawa invariants of the Mazur--Tate elements observed in Example \ref{example: X0(11)} can be generalised to elliptic curves $E$ satisfying the condition $\ord_p(L(E,1)/\Omega_E)<0$, where $\Omega_E$ is the real N\'eron period of $E$.
\begin{lthm}[Theorems~\ref{thm: Lvaldenom-version2} and \ref{thm: converse}]\label{thmA}
Let $E$ be an elliptic curve defined over $\QQ$ with good ordinary reduction at $p$. 
\begin{itemize}[leftmargin=0.5cm] 
    \item[(1)] If $\ord_p(L(E,1)/\Omega_{E})<0$, then
    \[\mu(\theta_n(E))=\ord_p(L(E,1)/\Omega_{E})\quad\text{and }\quad\lambda(\theta_n(E))=p^n-1\]
    for all $n\geq 0$.
    \item[(2)] If $\mu(L_p(E))=0$ and $\lambda(\theta_n(E))=p^n-1$
    for $n\gg0$, then
    \[\ord_p\left(\frac{L(E,1)}{\Omega_E}\right)<0.\]
\end{itemize}
\end{lthm}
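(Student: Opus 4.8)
The plan is to isolate, inside each $\theta_n(E)$, the contribution coming from the boundary symbol and to show that this contribution is a single scalar multiple of the norm element $N_n=\sum_{\sigma\in\Gal(k_n/\QQ)}\sigma$, whose image in $\Fp[\Gal(k_n/\QQ)]\cong\Fp[T]/(T^{p^n})$ (via $\sigma\mapsto 1+T$) is $T^{p^n-1}$, the socle of the group ring. Concretely, I would write $\theta_n(E)$ as a sum over $a\in(\ZZ/p^{n+1}\ZZ)^\times$ of the modular symbol values $\vp_E\{\infty,a/p^{n+1}\}$ twisted by the corresponding Galois elements, and then use the boundary exact sequence of \S\ref{sec:boundary} (cf.\ Remark~\ref{rk:boundary}) to split off the boundary part $\vp_E^{\partial}$. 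Since $p\nmid N_E$, every cusp $a/p^{n+1}$ is $\Gamma_0(N_E)$-equivalent to the cusp $0$, so $\vp_E^\partial$ takes the \emph{same} value $c$ on all of them; after projecting from $\QQ(\mu_{p^{n+1}})$ to $k_n$ (which only introduces a prime-to-$p$ multiplicity), the boundary part of $\theta_n(E)$ is exactly $c\cdot N_n$. Using that $L(E,1)/\Omega_E$ equals, up to a $p$-adic unit, $\vp_E\{0,\infty\}$ and that the residue at the cusp $\infty$ is $p$-integral, I would identify $\ord_p(c)=\ord_p(L(E,1)/\Omega_E)$, while the complementary cuspidal contribution $\theta_n(\vp_E^{\mathrm{cusp}})$ is $p$-integral.

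For part~(1), suppose $\ord_p(L(E,1)/\Omega_E)=-v<0$. Then $c$ has valuation $-v$ and contributes to every coefficient of $\theta_n(E)$, while the cuspidal part has valuation $\ge 0>-v$, so no cancellation occurs and $\mu(\theta_n(E))=-v$. Scaling by $p^{v}$ kills the cuspidal part modulo $p$ and leaves $\overline{p^{v}\theta_n(E)}=\bar c\,\overline{N_n}=\bar c\,T^{p^n-1}$, whence $\lambda(\theta_n(E))=p^n-1$. The mechanism is that the boundary contribution lives in the top graded piece $T^{p^n-1}$: it dominates the $\mu$-invariant exactly when the $L$-value has a pole, and then it is the only surviving term modulo $p$.

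For part~(2) I would argue by contraposition, assuming $\mu(L_p(E))=0$ and $\ord_p(L(E,1)/\Omega_E)\ge 0$ and deducing $\lambda(\theta_n(E))<p^n-1$ for $n\gg0$. The inequality $\ord_p(L(E,1)/\Omega_E)\ge 0$ makes the residues at both $0$ and $\infty$ $p$-integral, so $\vp_E^\partial$, and hence $\vp_E$ and $\theta_n(E)$, are $p$-integral. Now the boundary part, being a multiple of $N_n$, affects only the top-degree coefficient $T^{p^n-1}$ and therefore cannot alter the order of vanishing at $T=0$; thus $\lambda(\theta_n(E))$ is governed by the cuspidal part. Since $\mu(L_p(E))=0$, the cuspidal part survives modulo $p$, and with integrality in place the hypotheses of Theorem~\ref{thm: mt invariants are Lp invariants} are met, giving $\lambda(\theta_n(E))=\lambda(L_p(E))$ for $n\gg0$. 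As $\lambda(L_p(E))$ is a fixed finite number, $\lambda(\theta_n(E))=\lambda(L_p(E))<p^n-1$ once $p^n-1>\lambda(L_p(E))$, contradicting maximality.

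The main obstacle I anticipate lies in the first paragraph: rigorously establishing that the boundary part of $\theta_n(E)$ is the single multiple $c\,N_n$ with $\ord_p(c)=\ord_p(L(E,1)/\Omega_E)$. This requires the $p$-integral splitting $\vp_E=\vp_E^{\mathrm{cusp}}+\vp_E^{\partial}$ with all non-integrality concentrated on the boundary, a careful analysis of the Galois action on the cusps $a/p^{n+1}$ to see that the boundary values are constant across each orbit, and the identification of the residue at $0$ with the denominator of the normalised $L$-value. A secondary difficulty, in part~(2), is the input from Theorem~\ref{thm: mt invariants are Lp invariants}, which rests on the three-term norm relation for Mazur--Tate elements together with $\mu(L_p(E))=0$; this is exactly where the hypothesis on $\mu(L_p(E))$ is indispensable, since if $\mu(L_p(E))>0$ the cuspidal part also vanishes modulo $p$ and the norm contribution can push $\lambda$ back up to $p^n-1$ even when the $L$-value is $p$-integral, as in Example~\ref{example: X0(11)}.
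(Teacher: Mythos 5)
Your proposal contains a genuine gap at its core: the decomposition $\phi_E=\phi_E^{\mathrm{cusp}}+\phi_E^{\partial}$ with all non-integrality concentrated on a boundary symbol is never established, and it cannot be extracted from the exact sequence of \S\ref{sec:boundary}. Over a characteristic-zero field the splitting $\Symb(\Gamma_0(N_E),\CC)\cong\BSymb(\Gamma_0(N_E),\CC)\oplus H^1_{\mathrm{par}}$ is Hecke-equivariant, and $\phi_E$ is an eigenvector with a cuspidal (non-Eisenstein) eigensystem, so its boundary component is literally zero: there is no nonzero scalar $c$ to speak of. What your argument actually requires is a mod $p$ statement, namely that after rescaling by $p^{v}$ the reduction of $p^{v}\phi_E$ agrees with the reduction of a boundary symbol. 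That is exactly the mod $p$ multiplicity one / Eisenstein-congruence question which the paper explicitly says it cannot resolve in general and deliberately avoids; Proposition~\ref{thm: bsym to Lval} only records the implication in the direction where such a congruence is \emph{assumed}, and Example~\ref{example: 174b} shows the congruence can fail even when $E[p]$ is reducible. The same unproved decomposition underlies your part~(2), where you use it to pass from $\ord_p(L(E,1)/\Omega_E)\ge 0$ to the integrality of $\theta_n(E)$ for all $n$ before invoking Theorem~\ref{thm: mt invariants are Lp invariants}; so both halves of the proposal rest on it.

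The correct substitute, and the paper's actual route, is Wuthrich's integrality theorem (Theorem~\ref{thm: wuthrich Lp integrality}): the $p$-stabilised symbol $\phi_E^{\alpha}$ is $\Zp$-valued on the divisors $\{\infty\}-\{a/p^n\}$ (Lemma~\ref{mtinteg}), hence $\theta_n(E)-\alpha^{-1}\cor^n_{n-1}(\theta_{n-1}(E))\in\Lambda_n$, and iterating gives $\theta_n(E)\equiv\alpha^{-n}\cor^n_0(\theta_0(E))$ modulo integral elements (Theorem~\ref{thm:abstract}). Since $\cor^n_0$ of a scalar is the norm element, this produces precisely the ``socle $T^{p^n-1}$'' mechanism you describe, but rigorously and with no boundary symbol; the valuation of the scalar is pinned down by the interpolation formula \eqref{eq:interpolation}, $\theta_0(E)=(a_p(E)-2)\tfrac{L(E,1)}{\Omega_E}\sigma_1$, together with $a_p(E)\equiv 1\bmod p$, which the paper deduces from $L_p(E,T)|_{T=0}=(1-\alpha^{-1})^2L(E,1)/\Omega_E\in\Zp$. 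For part~(2) the paper argues directly rather than by contraposition: applying Lemma~\ref{lem:abstract} to the three-term relation \eqref{eq: pstabilised MT elts} forces $\mu(\theta_n(E))<0$ once $p^n-1>\lambda(L_p(E))$, and Theorem~\ref{thm:abstract} then propagates the negative $\mu$-invariant down to level $0$. If you replace your first paragraph by this integrality input, the remainder of your outline essentially aligns with the paper.
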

\begin{remark}
    Greenberg's $\mu=0$ conjecture \cite[Conjecture 1.11]{Greenberg} predicts that there exists at least one curve $E'$ in the isogeny class of $E$ satisfying $\mu(L_p(E'))=0$. Furthermore, $\lambda(\theta_n(E))$ is constant within an isogeny class.
    Thus, admitting Greenberg's conjecture, Theorem \ref{thmA} implies that the $\lambda$-invariants of $\theta_n(E)$ are maximal if and only if $E$ is isogenous to a curve $E'$ such that $\ord_p(L(E',1)/\Omega_{E'})<0$. 
\end{remark}

    The proof of Theorem \ref{thmA} relies crucially on a result of Wuthrich \cite{wuthrichint} on the integrality of the $p$-adic $L$-function of $E$. More precisely, it asserts that $L_p(E)$ is an element of the Iwasawa algebra $\Lambda$ when $E$ is good ordinary at $p$ (see Theorem~\ref{thm: wuthrich Lp integrality}). The condition $\ord_p(L(E,1)/\Omega_E)<0$ implies that $\theta_0(E)$ is non-integral. Combining these facts, we deduce that $\theta_n(E)$ is non-integral for all $n$. 
    The exact formula for $\lambda(\theta_n(E))$ given in part (a) is obtained by analyzing the effect on $\lambda$-invariants under the natural trace/corestriction map $\Qp[\Gal(k_n/\QQ)]\to\Qp[\Gal(k_{n+1}/\QQ)]$ (see Lemma \ref{lem: corlambda} for details). For the converse implication (part (b) of the theorem), we exploit the norm relations satisfied by $\theta_n(E)$ to show that the maximality of $\lambda$-invariants and the vanishing of $\mu(L_p(E))$ imply that $\theta_n(E)$ is non-integral for all $n$.

Assuming the Birch and Swinnerton-Dyer conjecture for $E/\QQ$, the condition $\ord_p(L(E,1)/\Omega_E)<0$ implies the reducibility of $E[p]$ as a $\Gal(\overline{\QQ}/\QQ)$-module as $|E_{\rm tor}(\QQ)|^2$ is the only term appearing in the denominator of the BSD quotient. However, the converse is not true (see Example~\ref{example: 174b}). 
Our next result shows that the condition $\ord_p\bigl(L(E,1)/\Omega_E\bigr) < 0$ induces a dichotomy in the behavior of Mazur-Tate elements:\ either their $\mu$- and $\lambda$-invariants stabilise to the corresponding invariants of the $p$-adic $L$-function or their $\lambda$-invariants are maximal.

\begin{lthm}[Theorem~\ref{thm: classify}]\label{thmB}
     Let $E$ be an elliptic curve over $\QQ$ with good ordinary reduction at an odd prime $p$. Assume $E$ is isogenous over $\QQ$ to an elliptic curve $E'/\QQ$ with $\mu(L_p(E'))=0$.  Then, we have the following dichotomy:
\begin{itemize}
        \item[(a)] $\theta_n(E') \in \Lambda_n$ for all $n \geq 0$.  In this case, $$\mu(\theta_n(E))=\mu(L_p(E)) \quad \text{and} \quad \lambda(\theta_n(E))=\lambda(L_p(E))
        $$
        for $n$ sufficiently large, or
        \item[(b)] $\theta_n(E') \notin \Lambda_n$ for all $n \geq 0$.  In this case,
        $$
        \mu(\theta_n(E))=\ord_p(L(E,1)/\Omega_E) \quad \text{and} \quad \lambda(\theta_n(E))=p^n-1
        $$
        for all $n \geq 0$.
    \end{itemize}
    Furthermore, case (b) occurs if and only if $\ord_p(L(E',1)/\Omega_{E'})<0$.
\end{lthm}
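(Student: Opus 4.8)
The plan is to reduce every assertion to the distinguished curve $E'$ and then to split according to the sign of $\ord_p(L(E',1)/\Omega_{E'})$. Since $E$ and $E'$ are isogenous over $\QQ$, they are attached to the same newform, so their modular symbols — and hence their Mazur--Tate elements and $p$-adic $L$-functions — differ only by a fixed constant $c\in\Qp^\times$ coming from the ratio of N\'eron periods (and the degree of the isogeny). Thus $\theta_n(E)=c\,\theta_n(E')$ and $L_p(E)=c\,L_p(E')$, which gives $\lambda(\theta_n(E))=\lambda(\theta_n(E'))$ (already recorded in the introduction as the isogeny-invariance of $\lambda$), $\lambda(L_p(E))=\lambda(L_p(E'))$, and $\mu(\bullet(E))=\mu(\bullet(E'))+\ord_p(c)$ for both $\theta_n$ and $L_p$. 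I would also record the normalization $\ord_p(\theta_0(E'))=\ord_p(L(E',1)/\Omega_{E'})$, so that $\theta_0(E')\in\Lambda_0=\Zp$ if and only if $\ord_p(L(E',1)/\Omega_{E'})\geq 0$; together with $\ord_p(L(E,1)/\Omega_E)=\ord_p(L(E',1)/\Omega_{E'})+\ord_p(c)$, every invariant in the conclusion becomes expressible through $E'$.

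The heart of the matter is an integrality dichotomy for $E'$, proved by induction along the tower. Let $\alpha\in\Zp^\times$ be the unit root of $x^2-a_px+p$ and $\beta=p/\alpha$, so $\ord_p(\beta)=1$. The standard comparison between the Mazur--Tate element and the $\alpha$-stabilized $p$-adic $L$-function reads
\[
\theta_n(E')=\alpha^{n}\,\mathrm{proj}_n\!\big(L_p(E')\big)+\beta\,\nu_n\big(\theta_{n-1}(E')\big)\qquad(n\geq 1),
\]
where $\mathrm{proj}_n\colon\Lambda\to\Lambda_n$ is the natural projection and $\nu_n\colon\Lambda_{n-1}\to\Lambda_n$ is the natural (integrality-preserving) map. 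By Theorem~\ref{thm: wuthrich Lp integrality}, $L_p(E')\in\Lambda$, so $\mathrm{proj}_n(L_p(E'))\in\Lambda_n$; since $\alpha$ is a unit and $\beta\in p\Zp$, the displayed identity shows that $\theta_{n-1}(E')\in\Lambda_{n-1}$ forces $\theta_n(E')\in\Lambda_n$. Hence if $\ord_p(L(E',1)/\Omega_{E'})\geq 0$, the base case $\theta_0(E')\in\Lambda_0$ propagates and $\theta_n(E')\in\Lambda_n$ for all $n$ (case (a)). Conversely, if $\ord_p(L(E',1)/\Omega_{E'})<0$, then Theorem~\ref{thmA}(1) applied to $E'$ gives $\mu(\theta_n(E'))=\ord_p(L(E',1)/\Omega_{E'})<0$ for every $n$, whence $\theta_n(E')\notin\Lambda_n$ for all $n$ (case (b)). As exactly one of the two sign conditions holds, this simultaneously proves that the dichotomy is exhaustive and establishes the ``furthermore'' clause.

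It then remains to read off the invariants and transport them to $E$. In case (b) I would invoke Theorem~\ref{thmA}(1) for $E'$, obtaining $\mu(\theta_n(E'))=\ord_p(L(E',1)/\Omega_{E'})$ and $\lambda(\theta_n(E'))=p^n-1$ for all $n$; applying the period constant yields $\lambda(\theta_n(E))=p^n-1$ and $\mu(\theta_n(E))=\ord_p(L(E',1)/\Omega_{E'})+\ord_p(c)=\ord_p(L(E,1)/\Omega_E)$. In case (a) I would feed the integrality just established, together with the hypothesis $\mu(L_p(E'))=0$, into the irreducibility-free refinement of Theorem~\ref{thm: PW11 irreducible thm}, namely Theorem~\ref{thm: mt invariants are Lp invariants}, to get $\mu(\theta_n(E'))=\mu(L_p(E'))=0$ and $\lambda(\theta_n(E'))=\lambda(L_p(E'))$ for $n\gg 0$; transporting by $c$ gives $\mu(\theta_n(E))=\ord_p(c)=\mu(L_p(E))$ and $\lambda(\theta_n(E))=\lambda(L_p(E))$.

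The step I expect to be the main obstacle is the comparison identity above and, in particular, the verification that the correction term $\beta\,\nu_n(\theta_{n-1}(E'))$ is divisible by $p$ with $\nu_n$ integrality-preserving: this is precisely what converts Wuthrich's integrality of $L_p(E')$ into the \emph{upward} propagation of integrality, and it is the only point where the fine $\alpha$-stabilization relationship between $\theta_n$ and $L_p$ — rather than merely their interpolation properties — is needed. A secondary point requiring care is pinning down the normalization identifying $\ord_p(\theta_0(E'))$ with $\ord_p(L(E',1)/\Omega_{E'})$ and checking that a single constant $c$ governs the comparison of both $\theta_n$ and $L_p$ across the isogeny, so that the $\mu$-shifts match up consistently in both branches of the dichotomy.
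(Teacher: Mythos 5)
Your overall strategy is sound and close to the paper's: both arguments reduce to $E'$ via the period ratio, combine the $\alpha$-stabilization identity with Wuthrich's integrality (Theorem~\ref{thm: wuthrich Lp integrality}), and then quote Theorem~\ref{thm: Lvaldenom-version2} and Theorem~\ref{thm: mt invariants are Lp invariants} to read off the invariants in the two branches. The only organizational difference is that you split on the sign of $\ord_p(L(E',1)/\Omega_{E'})$ and deduce the integrality dichotomy by induction up the tower, whereas the paper splits on whether $\theta_n(E')\notin\Lambda_n$ for some $n$, uses Theorem~\ref{thm:abstract} to propagate negative $\mu$ to all levels, and recovers the sign condition from Theorem~\ref{thm: converse}. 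Both routes work and use the same ingredients.

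There is, however, a concrete error in your key displayed identity. From \eqref{eq:stabilization} and Lemma~\ref{lem: corlambda}(1) one has $\theta_n(\phi_{E'}^\alpha)=\theta_n(E')-\alpha^{-1}\cor_{n-1}^n(\theta_{n-1}(E'))$, and the projection of $L_p(E')$ to level $n$ is $\alpha^{-(n+1)}\theta_n(\phi_{E'}^\alpha)$, so the correct comparison is
\[
\theta_n(E')=\alpha^{n+1}\,\mathrm{proj}_n\bigl(L_p(E')\bigr)+\tfrac{1}{\alpha}\,\cor_{n-1}^n\bigl(\theta_{n-1}(E')\bigr),
\]
with coefficient $1/\alpha\in\Zp^\times$ on the correction term, not $\beta=p/\alpha$. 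The step you single out as the crux --- that the correction term is divisible by $p$ --- is therefore false, but it is also unnecessary: the induction only needs the coefficient to lie in $\Zp$ and $\cor_{n-1}^n$ to carry $\Lambda_{n-1}$ into $\Lambda_n$, both of which hold, so upward propagation of integrality survives the correction. A secondary point: $\ord_p(\theta_0(E'))=\ord_p(L(E',1)/\Omega_{E'})$ is not automatic, since by \eqref{eq:interpolation} one has $\theta_0(E')=(a_p-2)\,(L(E',1)/\Omega_{E'})\,\sigma_1$ and $a_p-2$ could a priori be divisible by $p$. For your base case you only need the implication $L(E',1)/\Omega_{E'}\in\Zp\Rightarrow\theta_0(E')\in\Zp$, which is fine; in the opposite regime the paper first shows $a_p\equiv 1\bmod p$ (so $a_p-2$ is a unit) before concluding $\mu(\theta_0(E'))<0$, and you should do the same if you want the stated equality of valuations. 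With these two repairs your proof is correct.
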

We remind the reader that Greenberg conjectures that our assumption on the $\mu$-invariant always holds. 

Our methods can be easily extended to obtain analogues of Theorems \ref{thmA} and \ref{thmB} for $p$-ordinary Hecke eigenforms of weight $2$ and level coprime to $p$ (see Theorems \ref{thm: ThA for mod forms} and \ref{thm: ThB for mod forms}). 
For higher weight $p$-ordinary Hecke eigenforms, if their modular symbols are normalised using the cohomological periods (see Remark~\ref{rem: periods}), then under the assumption that the $\mu$-invariant of the associated $p$-adic $L$-function vanishes, it can be shown that the first case of Theorem \ref{thmB} always holds (see \cite[proof of Proposition 3.7]{PW}).

In the present article, we concentrate on good ordinary primes. For multiplicative primes, $\theta_{n}(E)$ have the same $\lambda$-invariants as the $p$-adic $L$-function for $n\gg 0$. When $p$ is a supersingular prime, we also have a good understanding of the growth of $\lambda(\theta_{n}(E))$; see \cite[Theorem 4.1]{PW}. In the case of additive primes, different patterns can emerge. This has been studied in \cite{LLP-additive} and \cite{doyon-lei}. Interestingly, the maximality of $\lambda$-invariants can occur for additive primes; see Example~\ref{example: 50b}. 

\subsection{Interpreting our results in terms of Eisenstein congruences}
As mentioned above, the Birch and Swinnerton-Dyer conjecture predicts that $\ord_p(L(E,1)/\Omega_E)<0$ implies that $E(\QQ)[p]$ is non-trivial.  In this case, when $E(\QQ)[p] \neq 0$, the semi-simplification of the $\Gal(\overline{\QQ}/\QQ)$-representation $E[p]$ is isomorphic to $1\oplus \omega$, where $\omega$ denotes the mod $p$ cyclotomic character. Thus, the associated cusp form $f_E=\sum_{n} a_n(E)q^n$ satisfies 
 \[a_\ell(E)\equiv \ell+1 \mod{p}\]
 for all primes $\ell\nmid N_E$. Furthermore, $a_\ell(E)=-\epsilon_\ell\in\{1,-1\}$ for primes $\ell\mid \mid N_E$, where $\epsilon_\ell$ is the eigenvalue of the Atkin--Lehner operator $W_\ell$ acting on $f_E$.
    Equivalently, since $f_E$ is a newform of level $N_E$, one may interpret $a_\ell(E)$ as the eigenvalue of the $U_\ell$-operator when $\ell\mid \mid N_E$ since $U_\ell=-W_\ell$ on new forms.
    
Note that there exists an Eisenstein series $E_{2,N_E}=\sum a_n(E_{2,N_E})q^n$ on $\Gamma_0(N_E)$ of weight 2 such that  
$a_n(E_{2,N_E})\equiv a_n(E)\mod{p}$ for all $n$ when $E$ has non-trivial $p$-torsion over $\QQ$ and $p\nmid N_E$. As mentioned earlier, Eisenstein series give rise to boundary symbols. One might hope to deduce from the aforementioned congruence to a congruence relation between the modular symbols attached to $f_E$ and the boundary symbol defined using $E_{2,N_E}$.

The space of weight 2 modular symbols of level $N$ can be canonically identified with the compactly supported cohomology group $H^1_c(Y_0(N),\CC)$. Thus, in order to pass from a congruence of Hecke eigenvalues to a congruence of the respective modular symbols (equivalently, a congruence of the corresponding cohomology classes), one usually requires a mod $p$ `multiplicity one' result for a cohomology group. Such a result would imply that a modular symbol is uniquely determined (up to scaling) by the Hecke eigenvalues.
Multiplicity one results of this kind are well established in the residually irreducible case, see, for example, \cite[Theorem 9.2]{edixhoven}. In the residually reducible case, the situation becomes more subtle, and one usually has to study Eisenstein completions of the Hecke algebra and the \textit{Eisenstein ideal}, as illustrated by the following example.
\begin{example}[$X_0(11)$ at $p=5$, revisited]\label{ex: meeting X0(11) again}
Let $\TT_{N}$ denote the Hecke algebra acting on modular forms of weight 2 and prime level $N$ generated by the usual Hecke operators $T_\ell$ for $\ell\nmid N$ and $U_N$. Let $\TT_N^{\mathrm{Eis}}$ denote the completion of $\TT_N$ at the ideal $(T_\ell-(\ell+1), U_N-1,p)$.

In his celebrated work \cite{Mazur77}, Mazur studied various properties of the Hecke algebra $\TT_{N}^{\mathrm{Eis}}$. He showed that the cuspidal quotient $\TT_{N}^{\mathrm{Eis},0}$ of $\TT_{N}^{\mathrm{Eis}}$ is a Gorenstein ring under certain explicit conditions on $(N,p)$. In particular, his results apply to $(N,p)=(11,5)$. Let $\mm$ denote the maximal ideal of $\TT_{N}^{\mathrm{Eis},0}$ (this corresponds to the residual representation $1\oplus \omega$). The Gorenstein property allows one to show
$\dim_{\Fp}J_0(N)(\overline{\QQ}_p)[\mm]=2$, which implies, using \cite[(7.5)]{Mazur77},
\[\dim_{\Fp} H^1(X_0(N),\Fp)^\pm[\mm]=1,\]
i.e., the space of cuspidal modular symbols with system of mod $p$ Hecke eigenvalues corresponding to $\mm$ is one dimensional.
In this case, $\TT_{Np}^{\mathrm{Eis}}$ is in fact Gorenstein, which can be used to show that the \textit{full} space of modular symbols satisfies
\[\dim_{\Fp} H^1(Y_0(N),\Fp)^\pm[\mm]=1.\]
We refer to this property as mod $p$ multiplicity one for level $N$.
Thus, up to a non-zero constant, the mod $5$ reduction of $\phi_E$ equals the mod 5 reduction of a boundary symbol, since they are both $\mathbb{F}_p$-valued modular symbols that are annihilated by $\mm$. 
\end{example}

\begin{remark}
    In \cite{doyon-lei2}, the $\lambda$-invariants for the Mazur--Tate elements attached to the weight 12 modular form $\Delta$ arising from the Ramanujan tau function were studied for the primes $p\in\{3,5,7\}$. These primes are Eisenstein, i.e., $\Delta$ satisfies a congruence modulo $p$ with certain Eisenstein series in weight $2$. It is shown in \textit{loc.\ cit.} that $\lambda(\theta_n(\Delta))$ is maximal (i.e., equal to $p^n-1$) for $p=5,7$ for $n\geq 1$ by verifying computationally that the modular symbols of $\Delta$ are congruent to a weight 2 boundary symbol modulo $p$. In this setting, mod $p$ multiplicity one results were not available to establish these congruences theoretically. This motivates our study of the maximality phenomenon for elliptic curves in the present article. 
\end{remark}
In Theorem \ref{thmA}, we deduce the maximality of $\lambda(\theta_n)$ without invoking a congruence with a boundary symbol (thus avoiding questions of mod $p$ multiplicity one). However, our data suggest that the maximality of $\lambda(\theta_n)$ occurs for an elliptic curve of square-free conductor $N$ (and $p$ good ordinary) if and only if mod $p$ multiplicity one holds for level $N$. Using the results of Wake--Wang-Erickson \cite{WAKE2021107543}, which studies the appropriate generalisation of Mazur's results on $\TT_N$ for prime $N$ to squarefree $N$, we are able to verify in each of the examples we have considered that the relevant Hecke algebra is indeed Gorenstein. Furthermore, congruences with boundary symbols are also observed, indicating that a mod $p$ multiplicity one result might actually hold. 
However, proving that the Gorenstein property implies mod $p$ multiplicity one in our current setting has eluded us so far.

\subsection{Organisation}
In \S \ref{sec: modsymbs intro}, we review the definitions of modular symbols and Mazur--Tate elements.
In \S \ref{sec: form2}, we prove preliminary results that will be used to prove our main theorems. We prove Theorems \ref{thmA} and \ref{thmB} in \S\ref{S:proofA} and \ref{S:proofB}, respectively. Both Theorems \ref{thmA} and \ref{thmB} are followed by the appropriate extension of our results to modular forms in \ref{S: proof of main res}. In \S\ref{sec: remarks, examples}, we discuss partial progress in relating Theorem~\ref{thmA} to congruences with boundary symbols and mod $p$ multiplicity one for modular symbols. We also give explicit examples related to our results, one of which illustrates an interesting utility of our results in detecting when mod $p$ multiplicity one fails.

\subsection{Acknowledgement}

Parts of this article are contained in NP's master's thesis at IISER, Pune (available \href{http://dr.iiserpune.ac.in:8080/xmlui/handle/123456789/10021}{here}). The authors thank Anthony Doyon, Rylan Gajek-Leonard and Preston Wake for interesting discussions related to the content of this article.  RP’s research has been partially supported by NSF
grant DMS-2302285 and by Simons Foundation Travel Support Grant for Mathematicians MPS-
TSM-00002405.

\section{Modular symbols, boundary symbols and Mazur--Tate elements}\label{sec: modsymbs intro}
\subsection{Modular symbols}
Let $R$ be any commutative ring. Let $\dDelta$ denote the abelian group of divisors on $\mathbb{P}^1(\QQ)$, and let $\dDelta^0$ denote the subgroup of degree 0 divisors. Let $\SL_2(\ZZ)$ act on $\dDelta^0$ by linear fractional transformations, which allows us to endow $\Hom(\dDelta^0, R)$ with a right action of $\SL_2(\ZZ)$ via
$$(\varphi \mid_{\gamma})(D) = (\varphi(\gamma \cdot D))\mid_{\gamma},$$
where $\varphi \in \Hom(\dDelta^0, V_{g}(R))$, $\gamma \in \SL_2(\ZZ)$ and $D \in \dDelta^0$.
\begin{defn}\label{defn:modsymb}
    Let $\Gamma\leq \SL_2(\ZZ)$ be a congruence subgroup. We define $\Hom_{\Gamma}(\dDelta^0, R)$ to be the space of $R$-valued \textbf{modular symbols} (of weight two), level $\Gamma$ for some commutative ring $R$, and we denote this space by $\Symb(\Gamma,R)$.
\end{defn}
\begin{remark}
    There is a canonical isomorphism 
    \[\text{Symb}(\Gamma, R) \cong H^1_c(\Gamma, R)\] where $H^1_c(\Gamma, {R})$ denotes the compactly supported cohomology group (see \cite[Proposition~4.2]{ash-ste}). 
\end{remark}

For $f \in S_2(\Gamma)$, we define the \textbf{modular symbol associated with $f$} as 
\[\xi_f: \{s\}-\{r\} \to 2\pi i \int_s^r f(z)dz,\] 
which is an element of $\Symb(\Gamma, \CC)$ as $f$ is a holomorphic cusp form. Let $A_f$ be the field of Fourier coefficients of $f$ and fix a prime $p$. The matrix $\iota \colonequals \begin{psmallmatrix}
    -1& 0 \\ 0 & 1
\end{psmallmatrix}$ acts as an involution on $\Symb(\Gamma, \CC)$ and we decompose $\xi_f=\xi_f^+ + \xi_f^-$ with $\xi_f^\pm$ in the $\pm1$-eigenspace of $\iota$ respectively. By a theorem of Shimura, there exist $\Omega^\pm \in \CC$ such that ${\xi_f^\pm/\Omega^\pm}$ take values in $A_f$, and in $\overline{\QQ}_p$ upon fixing an embedding of $\overline{\QQ}\hookrightarrow \overline{\QQ}_p$ (which we fix for the rest of the article). Define $\phi_f^\pm \colonequals \psi_f^\pm/\Omega^\pm$, and $\phi_f \colonequals \phi_f^+ + \phi_f^-$, which we regard as an element of $\Symb(\Gamma, \overline{\QQ}_p)$. Let $K_f$ denote the completion of the image of $A_f$ in $\overline{\QQ}_p$ and let $\mathcal{O}_f$ denote the ring of integers of $K_f$. We can choose $\Omega^+$ and $\Omega^-$ so that each of $\phi_f^+$ and $\phi_f^-$ takes values in $\mathcal{O}_f$ and that each takes on at least one value in $\mathcal{O}_f^\times$. We denote these periods $\Omega_f^\pm$; they are called \textbf{cohomological periods} of $f$, which are well-defined up to $p$-adic units (see \cite[Definition 2.1]{PW}).

For an elliptic curve $E$ defined over $\QQ$, we are supplied with the real and imaginary \textbf{Néron periods}, which we denote by $\Omega_E^+$ and $\Omega_E^-$, respectively. We shall use $\Omega_E$ to denote the real Néron period $\Omega_E^+$ in our arguments. 
Let $\phi_{E}$ denote the modular symbol attached to $E$ normalised by $\Omega_E^\pm$, so that
\[\phi_{E} \colonequals \frac{\xi^+_{f_E}}{\Omega^+_{E}}+  \frac{\xi^-_{f_E}}{\Omega^-_{E}}.\] 
It takes values in $\Qp$ but \textit{a priori} is not guaranteed to take values in $\Zp$.
\subsection{Mazur--Tate elements and $p$-adic $L$-functions}\label{ssec: MT and Lp}
For a non-negative integer $n$, let $\mathcal{G}_n \colonequals \Gal(\QQ(\mu_{p^n})/\QQ)$. Recall from the introduction that $k_n$ denotes the unique subextension of $\QQ(\mu_{p^\infty})/\QQ$ that is of degree $p^n$. We write $G_n=\Gal(k_n/\QQ)$.
For $a \in (\ZZ/p^n\ZZ)^\times$, we write $\sigma_a\in\cG_n$ for the element that satisfies $\sigma_a(\zeta)=\zeta^a$ for $\zeta \in \mu_{p^n}$.
\begin{defn}\label{defn: MT elements}
    For a modular symbol $\varphi \in \Symb(\Gamma, R)$, define the associated Mazur--Tate element of level $n\geq 1$ by 
\[\vartheta_n(\varphi)= \sum_{a \in (\ZZ/p^n\ZZ)^\times}\varphi(\{\infty\}-\{a/p^n\})\cdot \sigma_a \in R[\mathcal{G}_n].\]
    
    Let $f\in S_2(\Gamma)$. We define the \emph{the $p$-adic Mazur--Tate element} of level $n\ge0$ associated with $f$ to be the image of $\vartheta_{n+1}(\phi_f)$ under the natural projection $\overline{\QQ}_p[\cG_{n+1}]\to\overline{\QQ}_p[G_n]$. We denote this element by  $\theta_{n}(f)$.
\end{defn}

For an elliptic curve $E/\QQ$, we normalise $\theta_{n}(E)\colonequals\theta_{n}(\phi_{E}) \in \Qp[G_n]$ using the N\'eron periods. We define $\theta_{n}(\phi_{E,\mathrm{Coh}})$ to be the corresponding elements under the normalisation by the cohomological periods of $f_E$.
\begin{remark}\label{rem: theta0}
    Suppose $E$ has good reduction at a prime $p>2$ and let $f_E\in S_2(\Gamma_0(N_E))$ denote the associated cusp form. As $\theta_0(E)$ is the image of $\vartheta_1(\phi_f)$ under the natural map $\overline{\QQ}_p[\cG_1] \to \overline{\QQ}_p[G_0]$, we have $\theta_0(E) =\displaystyle \sum_{a\in (\ZZ/p\ZZ)^\times} \phi_E(\{\infty\}-\{a/p\})\sigma_1$,
    where $\sigma_1$ is the identity automorphism on $\QQ$.  Since $\phi_E$ is an eigenvector for the Hecke operator $\displaystyle T_p=\begin{psmallmatrix}
        p & 0 \\ 0 &1
    \end{psmallmatrix}+\sum_{u=0}^{p-1}\begin{psmallmatrix}
        1 & u \\ 0 & p
    \end{psmallmatrix}$, we have 
\[a_p(E)\phi_E=\phi_E(\{\infty\}-\left\{0\right\})+\sum_{a=0}^{p-1} \phi_E(\{\infty\}-\left\{a/{p}\right\}),\]
    which implies
    \begin{equation}
        \theta_0(E)= (a_p(E)-2)\phi_E(\{\infty\}-\{0\})\sigma_1= \left((a_p(E)-2)\frac{L(E,1)}{\Omega_E}\right)\sigma_1. \label{eq:interpolation}
    \end{equation}
    We have implicitly used the fact that $\{\infty\}-\{a\}$ is $\Gamma_0(N_E)$-equivalent to $\{\infty\}-\{0\}$ for all $a \in \ZZ/p\ZZ$. The last equality above follows from $\phi_E(\{\infty\}-\{0\})\cdot \Omega_E=\displaystyle 2\pi i\int^\infty_0 f_E(z) dz=L(E,1)$. Similarly, for $f\in S_2(\Gamma_1(N),\epsilon_f)$, we have 
    \[\theta_0(f)= (a_p(f)-\epsilon_f(p)-1)\phi_f(\{\infty\}-\{0\}).\]
\end{remark}

We briefly recall the construction of the $p$-adic $L$-function of a $p$-ordinary Hecke eigenform $f=\sum_{n} a_n(f)e^{2\pi inz}\in S_2(\Gamma_1(N),\epsilon_f)$ when $ p\nmid N$. Let $\alpha$ denote the unique $p$-adic unit root of the Hecke polynomial $X^2-a_p(f)X+\epsilon_f(p)p$. We consider the $p$-stabilisation 
\begin{equation}\label{eq:f alpha defn}
    f_{\alpha}(z)\colonequals f(z)- \frac{\epsilon_f(p)p}{\alpha}f(pz).
\end{equation}
\[\]
Define $\phi_{f}^\alpha$ as the `$p$-stabilised' modular symbol attached to $f$ normalised by some periods $\Omega^\pm$, i.e.,
\begin{equation}\label{eq:stabilization}
    \phi_{f}^\alpha \colonequals\phi_f-\frac{\epsilon_f(p)}{\alpha}\phi_f|\begin{psmallmatrix}
    p & 0 \\ 0 & 1
\end{psmallmatrix}.
\end{equation}
This gives us a norm-compatible system $\displaystyle\left\{\frac{1}{\alpha^{n+1}} \theta_{n}(\phi_{f}^{\alpha})\right\}_n$ since $\phi_{f}^\alpha$ is an eigenvector for the $U_p$-operator with eigenvalue $\alpha$. 

Let $\Lambda_{n,\mathcal{O}_f}$ denote $\mathcal{O}_f[G_n]$ and define $\Lambda_{\mathcal{O}_f}=\varprojlim\Lambda_{n,\mathcal{O}_f}$, where the connecting maps are the natural projections. When $\mathcal{O}_f=\Zp$, we shall write $\Lambda_n$ for $\Lambda_{n,\Zp}$ for simplicity.
Then,
\begin{align*}
   L_p(f)&=\varprojlim_{n}\frac{1}{\alpha^{n+1}} \theta_{n}(\phi_{f}^{\alpha})\in \Lambda_{\mathcal{O}_f}\otimes \overline{\QQ}_p
\end{align*}
is the trivial isotypic component of the $p$-adic $L$-function attached to $f$.  We use the notations $L_p(f)$ and $L_p(f, T)$ interchangeably, where $L_p(f, T)$ denotes the image of $L_p(f)$ under our fixed isomorphism $\Lambda_{\mathcal{O}_f}\otimes \overline{\QQ}_p\cong\mathcal{O}_f[[T]]\otimes \overline{\QQ}_p$. One can also define the $p$-adic $L$-function as an element of $\mathcal{O}_f[[\Gal(\QQ(\mu_{p^\infty})/\QQ]]\otimes \overline{\QQ}_p$ by considering the norm-compatible system built from $\frac{1}{\alpha^{n}}\vartheta_n(\phi_{f}^\alpha)$ directly. We denote this inverse limit by $\mathcal{L}_p(f)$.  

\begin{remark}[\textbf{On periods}]\label{rem: periods}
Our methods in this article rely on the integrality of the associated $p$-adic $L$-function. From the existence of the cohomological periods, it is clear, for a $p$-ordinary Hecke eigenform, that there exist periods that ensure the $p$-adic $L$-function is an element of $\Lambda_{\cO_f}$. We denote such periods by by $\Omega_{f, \bullet}^\pm$. For simplicity, we shall write $\Omega_{f,\bullet}$ for $\Omega_{f,\bullet}^+$. 
 
From now on, whenever we refer to $\phi_f$ for a $p$-ordinary Hecke eigenform $f$, we assume that the $\phi_f$ is normalised by a choice of $\Omega_{f,\bullet}^\pm$. The cohomological periods associated with the ordinary $p$-stabilisation $f_\alpha$ of $f$ (see \eqref{eq:f alpha defn} for the definition of $f_\alpha$) are possible candidates for $\Omega_{f,\bullet}^\pm$. While such a normalisation ensures integrality of the $p$-adic $L$-function, the modular symbol $\phi_f$ takes values in $K_f$, and not necessarily in $\mathcal{O}_f$.
    
\end{remark}

A choice of $\Omega_{f,\bullet}^\pm$ ensures $\mathcal{L}_p(f) \in \mathcal{O}_f[[\Gal(\QQ(\mu_{p^\infty})/\QQ]]$ and $L_p(f)\in \Lambda_{\mathcal{O}_f}$.
For elliptic curves, since we normalise $\phi^\alpha_E$ by the Neron periods $\Omega_E^\pm$, the corresponding $p$-adic $L$-functions, which we denote by $L_p(E)$ and $\mathcal{L}_p(E)$ are \textit{a priori} elements of $\Lambda\otimes \Qp$ and $ \Zp[[\Gal(\QQ(\mu_{p^\infty})/\QQ]]\otimes \Qp$, respectively. Nonetheless, we have:
\begin{theorem}\label{thm: wuthrich Lp integrality}
    Let $E$ be an elliptic curve defined over $\QQ$ with good ordinary reduction at an odd prime $p$. Then the $p$-adic $L$-function $\mathcal{L}_p(E)$ normalised by $\Omega_E^\pm$ lies in $\Zp[[\Gal(\QQ(\zeta_{p^\infty})/\QQ)]]$.
\end{theorem}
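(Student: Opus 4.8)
The plan is to reduce the statement to an integrality property of the individual values of the $p$-stabilised modular symbol $\phi_E^\alpha$, and then to compare the Néron normalisation against the cohomological one, for which integrality is automatic. Concretely, since $\mathcal{L}_p(E)=\varprojlim_n \frac{1}{\alpha^n}\vartheta_n(\phi_E^\alpha)$ and $\alpha\in\Zp^\times$ by the good ordinary hypothesis, the element $\mathcal{L}_p(E)$ lies in $\Zp[[\Gal(\QQ(\mu_{p^\infty})/\QQ)]]$ if and only if $\phi_E^\alpha(\{\infty\}-\{a/p^n\})\in\Zp$ for every $n\geq 1$ and every $a\in(\ZZ/p^n\ZZ)^\times$. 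I would therefore work directly with these values, using the relation $\phi_E^\alpha(\{\infty\}-\{a/p^n\})=\phi_E(\{\infty\}-\{a/p^n\})-\frac{1}{\alpha}\phi_E(\{\infty\}-\{a/p^{n-1}\})$ coming from $\phi_E^\alpha=\phi_E-\frac{1}{\alpha}\phi_E|\begin{psmallmatrix} p & 0 \\ 0 & 1 \end{psmallmatrix}$.

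Next I would compare the two normalisations. By Remark~\ref{rem: periods}, the symbols $\phi_{f_E}^\pm$ normalised by the cohomological periods $\Omega_{f_E,\bullet}^\pm$ are $\Zp$-valued, so it suffices to control the period ratios $u^\pm=\Omega_E^\pm/\Omega_{f_E,\bullet}^\pm$, which satisfy $\phi_E^\pm=(u^\pm)^{-1}\phi_{f_E}^\pm$. If both $u^\pm$ were $p$-adic units the result would be immediate, but this is exactly what can fail: a positive valuation of $u^+$ is precisely the source of the non-integrality of $\phi_E$ exploited in Theorem~\ref{thmA}, so one cannot transport integrality by a mere unit rescaling. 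The key point to exploit is that $\ord_p(u^+)>0$ forces $E[p]$ to be residually reducible, whence $a_p(E)\equiv 1\pmod p$ and the unit root satisfies $\alpha\equiv 1 \pmod p$; the factors $1-\alpha^{-1}$ governing the special values then acquire positive valuation and can compensate for the period defect.

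The main obstacle is to make this compensation work uniformly, at the level of the whole measure rather than at finitely many special $L$-values: one must show that the denominators introduced by $(u^\pm)^{-1}$ are cancelled by the combination $\phi_E(\{\infty\}-\{a/p^n\})-\frac{1}{\alpha}\phi_E(\{\infty\}-\{a/p^{n-1}\})$ simultaneously for all $a$ and $n$, using the congruence $\alpha\equiv 1\pmod p$ together with the distribution relations among the cusps $a/p^n$. Establishing this uniform integrality is precisely the content of Wuthrich's theorem \cite{wuthrichint}, on which we rely.
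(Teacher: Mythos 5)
Your proposal is correct and ultimately takes the same route as the paper: the paper's entire proof is the citation of Wuthrich's integrality result \cite[Corollary~18]{wuthrichint}, and your argument, after reducing to the integrality of the values $\phi_E^\alpha(\{\infty\}-\{a/p^n\})$ and identifying the period-defect versus $1-\alpha^{-1}$ compensation as the crux, defers that crux to exactly the same reference. The preliminary reduction you sketch is sound (it is essentially the content of Lemma~\ref{mtinteg}, stated in the paper as a consequence of this theorem), but it adds no independent content beyond the citation on which both you and the authors rely.
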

\begin{proof}
    This is \cite[Corollary~18]{wuthrichint}.
\end{proof}
\begin{remark}
    As a corollary of the above theorem, we deduce that the trivial isotypic component of $\mathcal{L}_p(E)$, i.e., $L_p(E)$ lies in $\Lambda\cong\Zp[[T]]$.
\end{remark}
Theorem \ref{thm: wuthrich Lp integrality} allows us to deduce the following:
\begin{lemma} \label{mtinteg}
   Let $E$ be an elliptic curve defined over $\QQ$ with good ordinary reduction at a prime $p>2$. 
    Then
 \[{\phi_{E}^\alpha(\{\infty\}-\{a/p^n\})} \in \Zp\] for all $a \in (\ZZ/p^n\ZZ)^\times$ and $n\geq 0$.  
\end{lemma}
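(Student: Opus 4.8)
The plan is to obtain the lemma as a direct consequence of Wuthrich's integrality theorem (Theorem~\ref{thm: wuthrich Lp integrality}), the only real work being to unwind the definition of $\mathcal{L}_p(E)$ layer by layer and to keep track of the unit $\alpha$. Recall from \S\ref{ssec: MT and Lp} that $\mathcal{L}_p(E)$ is \emph{defined} as the inverse limit of the norm-compatible system $\{\alpha^{-n}\vartheta_n(\phi_E^\alpha)\}_n$, and that $\Zp[[\Gal(\QQ(\mu_{p^\infty})/\QQ)]]=\varprojlim_n \Zp[\cG_n]$ with transition maps the natural projections. Thus the level-$n$ component of $\mathcal{L}_p(E)$ is exactly $\alpha^{-n}\vartheta_n(\phi_E^\alpha)$, which a priori lies in $\Zp[\cG_n]\otimes\Qp$. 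Theorem~\ref{thm: wuthrich Lp integrality} asserts $\mathcal{L}_p(E)\in \Zp[[\Gal(\QQ(\mu_{p^\infty})/\QQ)]]$, and projecting to each finite layer forces $\alpha^{-n}\vartheta_n(\phi_E^\alpha)\in \Zp[\cG_n]$ for every $n$.

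First I would record that $\alpha$ is a $p$-adic unit lying in $\Zp$. Since $E$ has rational Hecke eigenvalue $a_p(E)\in\ZZ$ and good ordinary reduction means $a_p(E)\not\equiv 0\pmod p$, Hensel's lemma produces a unique unit root $\alpha$ of $X^2-a_p(E)X+p$ inside $\Zp^\times$. Consequently $\alpha^{-n}\in\Zp^\times$, and clearing this unit from $\alpha^{-n}\vartheta_n(\phi_E^\alpha)\in \Zp[\cG_n]$ yields $\vartheta_n(\phi_E^\alpha)\in \Zp[\cG_n]$ for all $n$. This step is where it is essential that $p$ is a prime of ordinary reduction: without $\alpha$ being a unit, integrality of $\mathcal{L}_p(E)$ would not descend to integrality of the individual $\vartheta_n(\phi_E^\alpha)$.

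Next I would read off the coefficients. By Definition~\ref{defn: MT elements} we have $\vartheta_n(\phi_E^\alpha)=\sum_{a\in(\ZZ/p^n\ZZ)^\times}\phi_E^\alpha(\{\infty\}-\{a/p^n\})\,\sigma_a$, and under the identification $\cG_n\cong(\ZZ/p^n\ZZ)^\times$ the group elements $\{\sigma_a:a\in(\ZZ/p^n\ZZ)^\times\}$ form a $\Zp$-basis of $\Zp[\cG_n]$. Integrality of a group-ring element is therefore equivalent to integrality of each of its coefficients in this basis, so $\phi_E^\alpha(\{\infty\}-\{a/p^n\})\in\Zp$ for all $a$ and all $n\geq 1$, which is the assertion. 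The boundary case $n=0$ concerns only the single value $\phi_E^\alpha(\{\infty\}-\{0\})$ and can be handled directly (for instance, it is recovered from the level-$1$ layer via the $U_p$-eigenvalue relation, in the spirit of Remark~\ref{rem: theta0}).

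I do not anticipate a genuine obstacle: the entire substance is packaged in Theorem~\ref{thm: wuthrich Lp integrality}, and the remaining steps are bookkeeping. The two points that require care are (i) confirming that the lemma is about the values of $\vartheta_n$ at arguments $a/p^n$ and hence matches the finite layers of $\mathcal{L}_p(E)$ (the object defined over $\Gal(\QQ(\mu_{p^\infty})/\QQ)$), rather than the projected elements $\theta_n$ over $G_n$; and (ii) the observation, already made above, that passing from $\alpha^{-n}\vartheta_n(\phi_E^\alpha)$ to $\vartheta_n(\phi_E^\alpha)$ preserves integrality precisely because $\alpha\in\Zp^\times$. Both are routine once the definitions in \S\ref{ssec: MT and Lp} are aligned.
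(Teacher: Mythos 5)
Your proposal is correct and follows essentially the same route as the paper: invoke Wuthrich's integrality theorem, identify the level-$n$ projection of $\mathcal{L}_p(E)$ with $\alpha^{-n}\vartheta_n(\phi_E^\alpha)$, use $\alpha\in\Zp^\times$ to conclude $\vartheta_n(\phi_E^\alpha)\in\Zp[\cG_n]$, and read off the coefficients. The extra care you take with the $n=0$ case and with Hensel's lemma for $\alpha$ is fine but not needed beyond what the paper already does.
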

\begin{proof}
     Theorem \ref{thm: wuthrich Lp integrality} says that $\mathcal{L}_p(E)$ is an element of \[\Zp[[\Gal(\QQ(\zeta_{p^\infty})/\QQ)]]\cong \underset{n}{\varprojlim}\Zp[\mathcal{G}_n]\cong \Zp[[\Zp^\times]].\]
    The projection of $\mathcal{L}_p(E)$ to $\Zp[\mathcal{G}_n]$ is 
    $\vartheta_n(\phi_E^\alpha)/\alpha^n$, where $\vartheta_n(\phi_E^\alpha)$ is defined as 
    \[\vartheta_n(\phi_E^\alpha) \colonequals \sum_{a \in (\ZZ/p^n\ZZ)^\times} \phi^\alpha_{E}(\{\infty\}-\{a/p^n\})\cdot \sigma_a \]
    for $\sigma_a \in \mathcal{G}_n = \Gal(\QQ(\zeta_{p^n})/\QQ)$. As $\alpha \in \Zp^\times$, we have $ \vartheta_n(\phi_E^\alpha) \in \Zp[\mathcal{G}_n]$,
    implying $\phi^\alpha_E(\{\infty\}-\{a/p^n\}) \in \Zp$ for all $a \in (\ZZ/p^n\ZZ)^\times$ and $n\geq 0$.
\end{proof}
We note that the same proof works for a $p$-ordinary eigenform $f\in S_k(\Gamma_1(N),\epsilon_f)$ with $p\nmid N$ and implies that $\phi_f^\alpha((\{\infty\}-\{a/p^n\}) \in \Zp$ for all $a \in (\ZZ/p^n\ZZ)^\times$ and $n\geq 0$, since we normalise the modular symbols by $\Omega_{f,\bullet}^\pm$.
\section{General results on Iwasawa invariants}\label{sec: form2}

Let $\gamma_n$ be a generator of ${G}_n$ and let $K$ be a finite extension of $\Qp$ with a fixed uniformizer $\varpi$. Let $\cO_K$ denote the ring of integers of $K$.
For any element $F \in K[{G}_n]$, we may write it as a polynomial 
$\sum_{i=0}^{p^n-1}a_iT^i$ with $T=\gamma_n-1$ and $a_i\in K$. 
\begin{defn}[Iwasawa invariants]\label{def:inv}
    The $\mu$ and $\lambda$-invariants of $F=\sum_{i=0}^{p^n-1}a_iT^i \in K[G_n]$ are defined as 
\begin{align*}
    \mu(F) &= \underset{i}{\min}\{\ord_\varpi(a_i)\},\\
    \lambda(F) &= \min\{ i : \ord_\varpi(a_i) = \mu(F)\},
\end{align*}
where $\ord_\varpi$ is the valuation such that $\ord_\varpi(\varpi)=1$. 
\end{defn}
These invariants are independent of the choice of $\gamma_n$.
One can directly define $\mu$ and $\lambda$-invariants for an element of the finite level group algebra $K[G_n]$ without reference to a generator $\gamma_n$; for more details, see \cite[\S~3.1]{PW}.

We begin with a lemma involving the Iwasawa invariants of a sum of elements in $ K[G_n]$, which will be utilised in the proof of Theorem \ref{thmA} (Theorem \ref{thm: converse}).
\begin{lemma}\label{lem:abstract}
    Let $F_1,F_2\in K[G_n]$ with $\lambda(F_1+F_2)<\lambda(F_1),\lambda(F_2)$. Then $$\mu(F_1+F_2)>\mu(F_1)=\mu(F_2) \quad\text{and} \quad\lambda(F_1)=\lambda(F_2).$$
\end{lemma}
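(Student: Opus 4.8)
The plan is to argue entirely coefficient-wise. Fix a generator $\gamma_n$ of $G_n$, set $T=\gamma_n-1$, and write $F_1=\sum_i a_i T^i$ and $F_2=\sum_i b_i T^i$ with $a_i,b_i\in K$, so that $F_1+F_2=\sum_i (a_i+b_i)T^i$. I will use repeatedly the two defining features of the invariants from Definition~\ref{def:inv}: $\mu(F_1)=\min_i\ord_\varpi(a_i)$ is the least coefficient valuation, and $\lambda(F_1)$ is the \emph{smallest} index at which this least valuation occurs, so that $\ord_\varpi(a_i)>\mu(F_1)$ for every $i<\lambda(F_1)$ while $\ord_\varpi(a_{\lambda(F_1)})=\mu(F_1)$, and similarly for $F_2$. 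The only analytic input is the non-archimedean inequality $\ord_\varpi(x+y)\geq\min(\ord_\varpi(x),\ord_\varpi(y))$, with equality whenever $\ord_\varpi(x)\neq\ord_\varpi(y)$. Write $\mu_i=\mu(F_i)$, $\lambda_i=\lambda(F_i)$ for $i=1,2$, and $\mu_{12},\lambda_{12}$ for the invariants of $F_1+F_2$.

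The first and key step is to extract information at the index $j:=\lambda_{12}$, where by definition $\ord_\varpi(a_j+b_j)=\mu_{12}$. The hypothesis $\lambda_{12}<\lambda_1,\lambda_2$ gives $j<\lambda_1$ and $j<\lambda_2$, hence $\ord_\varpi(a_j)>\mu_1$ and $\ord_\varpi(b_j)>\mu_2$. Since both of these exceed $\min(\mu_1,\mu_2)$, the ultrametric inequality yields
\[
\mu_{12}=\ord_\varpi(a_j+b_j)\geq\min(\ord_\varpi(a_j),\ord_\varpi(b_j))>\min(\mu_1,\mu_2).
\]
Thus the minimal valuation of the sum is strictly larger than $\min(\mu_1,\mu_2)$: morally, the low-index leading behavior must cancel.

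With this strict inequality in hand, I would rule out $\mu_1\neq\mu_2$ and then $\lambda_1\neq\lambda_2$ by testing a single well-chosen coefficient. Assume without loss of generality $\mu_1\leq\mu_2$. If $\mu_1<\mu_2$, then at index $\lambda_1$ we have $\ord_\varpi(a_{\lambda_1})=\mu_1<\mu_2\leq\ord_\varpi(b_{\lambda_1})$, so the two valuations differ and $\ord_\varpi(a_{\lambda_1}+b_{\lambda_1})=\mu_1$. This forces $\mu_{12}\leq\mu_1=\min(\mu_1,\mu_2)$, contradicting the displayed strict inequality; hence $\mu_1=\mu_2=:\mu$, and the display becomes $\mu_{12}>\mu$, which is the first asserted conclusion. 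For the second, suppose $\lambda_1\neq\lambda_2$, say $\lambda_1<\lambda_2$. At index $\lambda_1$ we now have $\ord_\varpi(a_{\lambda_1})=\mu$ while $\ord_\varpi(b_{\lambda_1})>\mu$ because $\lambda_1<\lambda_2$, so again the valuations differ and $\ord_\varpi(a_{\lambda_1}+b_{\lambda_1})=\mu$. This gives $\mu_{12}\leq\mu$, contradicting $\mu_{12}>\mu$; therefore $\lambda_1=\lambda_2$.

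I expect the only real subtlety, rather than a genuine obstacle, to be the bookkeeping with strict versus non-strict inequalities: one must use that $\lambda(F)$ is the \emph{first} index achieving the minimal valuation, so that coefficients at strictly smaller indices have strictly larger valuation, and one may invoke the equality case of the ultrametric inequality only when the two valuations being compared are provably distinct. The symmetric roles of $F_1$ and $F_2$ are dispatched by a single ``without loss of generality'' reduction in each of the last two steps. No feature of the ring structure of $K[G_n]$ beyond coefficient-wise addition is needed, and the conclusion is independent of the choice of $\gamma_n$ since the invariants themselves are.
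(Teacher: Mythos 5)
Your proof is correct and takes essentially the same route as the paper: an elementary coefficient-wise argument with the ultrametric inequality, after writing $F_1,F_2$ as polynomials in $T=\gamma_n-1$. The only difference is the order of deductions --- you first establish $\mu(F_1+F_2)>\min(\mu(F_1),\mu(F_2))$ from the coefficient at index $\lambda(F_1+F_2)$ and then force the two equalities from it, whereas the paper first proves $\mu(F_1)=\mu(F_2)$ and $\lambda(F_1)=\lambda(F_2)$ by contradiction and only then obtains the strict $\mu$-inequality from cancellation of the leading terms at index $d=\lambda(F_1)=\lambda(F_2)$.
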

\begin{proof}
    Suppose that $\mu(F_1)>\mu(F_2)$. Then $\mu(F_1+F_2)=\mu(F_2)$ and $\lambda(F_1+F_2)=\lambda(F_2)$. This contradicts $\lambda(F_1+F_2)<\lambda(F_2)$. Thus, we deduce $\mu(F_1)=\mu(F_2)$.

    After multiplying by a scalar if necessary, we may assume that $\mu(F_1)=\mu(F_2)=0$. Suppose that $\lambda(F_1)>\lambda(F_2)$. Then we have $\lambda(F_1+F_2)=\lambda(F_2)$. This contradicts $\lambda(F_1+F_2)<\lambda(F_2)$. Hence, we deduce $\lambda(F_1)=\lambda(F_2)$.

    Let $d=\lambda(F_1)=\lambda(F_2)$. There exists $u_i\in \cO^\times$ such that
    \[
    F_i\equiv u_i T^d\mod \varpi\cO[G_n],\quad i=1,2.
    \]
    In particular, $F_1+F_2\equiv (u_1+u_2)T^d\mod \varpi\cO[G_n]$. As $\lambda(F_1+F_2)<d$, this implies that $u_1+u_2\equiv 0\mod \varpi$. Hence, $\mu(F_1+F_2)>0$, as desired.
\end{proof}

Let $\pi_{n-1}^{n} : G_{n} \to  G_{n-1}$ be the natural projection map. For $\sigma \in  G_{n-1}$, define the corestriction map
\[\cor_{n-1}^n(\sigma) \colonequals \sum_{\substack{\pi_{n-1}^{n}(\tau)=\sigma \\ \tau \in  \Gal(k_{n}/\QQ)}} \tau\in K[G_n]\]
   which extends $K$-linearly to  a map $K[G_{n-1}]\to K[  G_n]$. We recall some standard facts about Iwasawa invariants of Mazur--Tate elements under the natural projection and corestriction maps.
\begin{lemma}\label{lem: corlambda}
\begin{itemize}
    \item[(1)] For an integer $N\geq 1$ and a prime $p$, let $\phi \in \Symb(\Gamma_0(N),\cO)$ be a modular symbol. For $n\geq 1$, we have 
\[\theta_{n}(\phi\mid\begin{psmallmatrix}
    p &0 \\ 0 & 1
\end{psmallmatrix})= \cor^n_{n-1}(\theta_{n-1}(\phi)).\]
       \item[(2)] For $\theta \in K[G_{n-1}]$, we have 
    \[\mu(\cor_{n-1}^n(\theta))=\mu(\theta) \text{ and } \lambda(\cor_{n-1}^n(\theta))= p^n-p^{n-1}+\lambda(\theta).\]
    \item[(3)] For $\theta \in \cO[G_n]$, if $\mu(\pi_{n-1}^n(\theta))=0$, then $\mu(\theta)=0$.
   \end{itemize}    
\end{lemma}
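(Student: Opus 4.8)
The three parts are essentially independent, so the plan is to treat them in turn, with part~(2) carrying the real content.

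For part~(1), the plan is to reduce to a clean identity at the level of cyclotomic group rings together with a commutative square. First I would record the effect of the matrix on cusps: since $\begin{psmallmatrix} p & 0 \\ 0 & 1\end{psmallmatrix}$ acts on $\mathbb{P}^1(\QQ)$ by $r\mapsto pr$ and fixes $\infty$, one gets $(\phi\mid\begin{psmallmatrix} p & 0 \\ 0 & 1\end{psmallmatrix})(\{\infty\}-\{a/p^{n+1}\})=\phi(\{\infty\}-\{a/p^n\})$, and, using $\Gamma_0(N)$-invariance of $\phi$ under translations $\begin{psmallmatrix}1&m\\0&1\end{psmallmatrix}\in\Gamma_0(N)$, this value depends only on $a \bmod p^n$. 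Substituting into the definition of $\vartheta_{n+1}$ and reindexing the sum over the $p$ lifts then gives
\[
\vartheta_{n+1}\bigl(\phi\mid\begin{psmallmatrix} p & 0 \\ 0 & 1\end{psmallmatrix}\bigr)=\cor^{\cG}\bigl(\vartheta_n(\phi)\bigr),
\]
where $\cor^{\cG}\colon\overline{\QQ}_p[\cG_n]\to\overline{\QQ}_p[\cG_{n+1}]$ is the fibre-sum (corestriction) along the reduction $\cG_{n+1}\to\cG_n$. I would then verify that the projections $p_m\colon\overline{\QQ}_p[\cG_{m+1}]\to\overline{\QQ}_p[G_m]$ (quotient by the Teichm\"uller subgroup $\mu_{p-1}$) intertwine these maps, i.e.\ $p_n\circ\cor^{\cG}=\cor_{n-1}^n\circ\, p_{n-1}$; concretely, the $p$ lifts $a\equiv b\pmod{p^n}$ map bijectively onto the fibre of $\pi_{n-1}^n$ over the image of $\sigma_b$. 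Applying $p_n$ to the displayed identity and using $\theta_{n-1}(\phi)=p_{n-1}(\vartheta_n(\phi))$ yields the claim.

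For part~(2), the key is to realize corestriction as multiplication by a norm element. Fixing compatible generators with $\pi_{n-1}^n(\gamma_n)=\gamma_{n-1}$ and writing $T=\gamma_n-1$, $S=\gamma_{n-1}-1$, the kernel $H=\ker(\pi_{n-1}^n)$ has order $p$ and $\cor_{n-1}^n(g(S))=N_H\cdot g(T)$, where $N_H=\sum_{h\in H}h=\sum_{j=0}^{p-1}(1+T)^{jp^{n-1}}$ is the norm element. The central computation is the congruence
\[
N_H\equiv T^{\,p^n-p^{n-1}}\pmod{\varpi},
\]
which I would prove by writing $N_H=\bigl((1+T)^{p^n}-1\bigr)/\bigl((1+T)^{p^{n-1}}-1\bigr)$ and using $(1+T)^{p^m}\equiv 1+T^{p^m}\pmod p$. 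Since $\deg g<p^{n-1}$ and $\deg N_H=p^n-p^{n-1}$, the product $N_H\cdot g(T)$ has degree $<p^n$, hence is a genuine representative with no wrap-around modulo $(1+T)^{p^n}-1$, so its Iwasawa invariants can be read off from ordinary polynomial multiplication. Because $(\cO/\varpi)[T]$ is a domain, the lowest-degree unit terms multiply without cancellation, giving $\mu(\cor_{n-1}^n(\theta))=\mu(\theta)$ and $\lambda(\cor_{n-1}^n(\theta))=(p^n-p^{n-1})+\lambda(\theta)$.

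Part~(3) is immediate from the definitions: $\mu(\theta)>0$ is equivalent to $\theta\in\varpi\cO[G_n]$, and the $\cO$-linear projection $\pi_{n-1}^n$ sends $\varpi\cO[G_n]$ into $\varpi\cO[G_{n-1}]$, so $\mu(\theta)>0$ forces $\mu(\pi_{n-1}^n(\theta))>0$; the contrapositive is the statement. The main obstacle is the computation underlying part~(2): correctly identifying corestriction with multiplication by $N_H$, establishing the mod-$\varpi$ congruence $N_H\equiv T^{p^n-p^{n-1}}$, and---crucially---checking the degree bound that prevents wrap-around in the finite group ring, without which additivity of $\lambda$ under multiplication could fail. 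The two-layer Galois bookkeeping in part~(1) (cyclotomic $\cG_m$ versus $G_m=\Gal(k_m/\QQ)$) is the other delicate point, though conceptually routine once the commutative square is in place.
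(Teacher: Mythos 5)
Your proof is correct and is essentially the standard argument: the paper itself only cites \cite[Lemma 2.6]{PW}, \cite[\S 4]{pollack05} and \cite[Lemmas 3.2--3.3]{PW}, and your treatment (cusp action plus translation-invariance and the commutation of corestriction with the projections $\cG_{m+1}\to G_m$ for part (1); corestriction as multiplication by the norm element $N_H\equiv T^{p^n-p^{n-1}}\bmod\varpi$ with the degree bound ruling out wrap-around for part (2); the contrapositive via $\varpi\cO[G_n]$ for part (3)) is exactly what those references carry out. No gaps.
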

\begin{proof}
    Part (1) is \cite[Lemma 2.6]{PW}; (2) and (3) are discussed in \cite[\S~4]{pollack05} as well as \cite[Lemmas 3.2 and 3.3]{PW}.
\end{proof}

We prove the main theorem of this section that involves an arbitrary sequence of elements in $K[G_n]$, which will be crucial in the proofs of our main results in this article. 
\begin{theorem}
\label{thm:abstract}
Let $\{\theta_n\}_{n\geq0}$ be a sequence in $ K[G_n]$ such that
\begin{itemize}
    \item[(1)]  $\mu(\theta_n)$ is bounded from below as $n \to \infty$, and 
    \item[(2)] there exists $\alpha \in \cO^\times$ such that for all $n\ge1$
\[\theta^\alpha_n := \theta_n - \alpha^{-1} \cor^n_{n-1}(\theta_{n-1}) \in \cO[G_{n}].\]
\end{itemize}
If there exists an integer $n$ such that $\mu(\theta_n)<0$, then
\[\lambda(\theta_n) = p^n-1 \quad\text{ and }\quad \mu(\theta_n) = \mu(\theta_0) < 0\]
 for all $n\geq 0$.
\end{theorem}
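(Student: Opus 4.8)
The plan is to exploit the decomposition $\theta_n = \alpha^{-1}\cor_{n-1}^n(\theta_{n-1}) + \theta_n^\alpha$ coming directly from hypothesis (2), together with the elementary observation that the $\mu$- and $\lambda$-invariants of a sum $F_1 + F_2 \in K[G_n]$ are governed by the summand of strictly smaller $\mu$-invariant. Concretely, I would first record the sublemma: if $\mu(F_1) < \mu(F_2)$, then $\mu(F_1 + F_2) = \mu(F_1)$ and $\lambda(F_1 + F_2) = \lambda(F_1)$. This is immediate from Definition~\ref{def:inv} by comparing $\varpi$-adic valuations of the coefficients, since the coefficients of $F_2$ can neither lower the valuation at any index nor cancel the lowest-index minimal-valuation coefficient of $F_1$. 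I would also note that, since $\alpha \in \cO^\times$, the operator $\alpha^{-1}\cor_{n-1}^n(\cdot)$ preserves $\mu$ and shifts $\lambda$ by $p^n - p^{n-1}$, by Lemma~\ref{lem: corlambda}(2).

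With these in hand, the core is a two-directional induction on the $\mu$-invariants. Since $\theta_n^\alpha \in \cO[G_n]$ means $\mu(\theta_n^\alpha) \geq 0$, I first argue downward: if $\mu(\theta_n) < 0$ then, were $\mu(\theta_{n-1}) \geq 0$, both summands $\alpha^{-1}\cor_{n-1}^n(\theta_{n-1})$ and $\theta_n^\alpha$ would have non-negative $\mu$, forcing $\mu(\theta_n) \geq 0$, a contradiction; hence $\mu(\theta_{n-1}) < 0$. Starting from the hypothesized index $n_0$ with $\mu(\theta_{n_0}) < 0$ and descending, I conclude $\mu(\theta_m) < 0$ for all $0 \leq m \leq n_0$, and in particular $\mu(\theta_0) < 0$. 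I then argue upward: whenever $\mu(\theta_{n-1}) < 0$, the two summands have distinct $\mu$-invariants, since the corestriction term has $\mu(\theta_{n-1}) < 0 \leq \mu(\theta_n^\alpha)$, so the sublemma gives $\mu(\theta_n) = \mu(\theta_{n-1})$ and $\lambda(\theta_n) = \lambda(\alpha^{-1}\cor_{n-1}^n(\theta_{n-1})) = p^n - p^{n-1} + \lambda(\theta_{n-1})$. Induction from $\theta_0$ then yields $\mu(\theta_n) = \mu(\theta_0) < 0$ for all $n \geq 0$.

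Finally, the $\lambda$-invariant follows by telescoping the recursion $\lambda(\theta_n) = \lambda(\theta_{n-1}) + (p^n - p^{n-1})$, giving $\lambda(\theta_n) = \lambda(\theta_0) + \sum_{m=1}^n (p^m - p^{m-1}) = \lambda(\theta_0) + p^n - 1$. The base case is clean: $G_0$ is trivial, so $K[G_0] = K$, and since $\mu(\theta_0) < 0$ forces $\theta_0 \neq 0$, we have $\lambda(\theta_0) = 0$. Hence $\lambda(\theta_n) = p^n - 1$ for all $n$, as claimed.

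As for difficulties, there is no deep obstruction once the decomposition from hypothesis (2) is combined with Lemma~\ref{lem: corlambda}(2); the only points requiring care are the correct handling of the two induction directions (the downward step uses only integrality of $\theta_n^\alpha$, while the upward step needs the strict inequality $\mu(\theta_{n-1}) < 0$ to separate the two $\mu$-invariants) and the observation that the base case $\lambda(\theta_0) = 0$ is exactly what pins the final value to $p^n - 1$. I would remark that hypothesis (1), boundedness of $\mu(\theta_n)$ from below, is not actually needed for this implication, since the upward induction shows $\mu(\theta_n)$ is constant and hence automatically bounded; I therefore expect it is included for use in the complementary case, where no index has negative $\mu$-invariant.
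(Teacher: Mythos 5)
Your proof is correct, and while it rests on the same two ingredients as the paper's --- the decomposition $\theta_n = \alpha^{-1}\cor_{n-1}^n(\theta_{n-1}) + \theta_n^\alpha$ from hypothesis (2) and the behaviour of $\mu$ and $\lambda$ under corestriction (Lemma~\ref{lem: corlambda}(2)) --- the bookkeeping is organised differently. The paper sets $t := -\min_n \mu(\theta_n)$ (which is where hypothesis (1) is invoked), multiplies everything by $\varpi^t$ to land in $\cO[G_n]$, iterates the resulting congruence down to level $0$ to get $\theta'_n \equiv \alpha^{-n}\cor_0^n(\theta'_0) \bmod \varpi^t$, and then uses a level $m$ where the minimum is attained to see that $\mu(\theta'_0)=0$, after which the corestriction lemma pins down all invariants. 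You instead run a two-directional induction: the subadditivity $\mu(F_1+F_2)\ge\min(\mu(F_1),\mu(F_2))$ pushes negativity of $\mu$ downward to level $0$, and your sublemma (the summand of strictly smaller $\mu$ dominates both invariants --- the same elementary fact the paper uses inside Lemma~\ref{lem:abstract}) pushes the exact values $\mu(\theta_n)=\mu(\theta_0)$ and $\lambda(\theta_n)=p^n-1$ back upward. Your version is slightly cleaner and, as you correctly observe, shows that hypothesis (1) is superfluous for this implication, since the upward induction forces $\mu(\theta_n)$ to be constant; the paper's proof, by contrast, genuinely uses (1) to define $t$. This is a worthwhile observation, though in the intended application (Remark~\ref{rem: verifying mt elts satisfy abstract assumptions}) the lower bound on $\mu(\theta_n(f))$ is available for free, so nothing is lost either way.
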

\begin{proof}
Define $t:=-\underset{n}{\min} \{\mu(\theta_n)\}>0$, which exists by assumption (1). For each $n$, define $\theta'_n:= \varpi^t\theta_n$, which belongs to $\cO[G_n]$ by the definition of $t$. From (2), we have
\[\theta'_n \equiv \frac{1}{\alpha}\cor^n_{n-1}(\theta'_{n-1})\mod {\varpi^t\cO[G_n]}\]
for all $n\geq1$. Iterating this congruence, we obtain 
\begin{equation}\label{eq: abstract cong for theta'}
    \theta'_n \equiv \frac{1}{\alpha^n} \cor^n_0(\theta'_0) \mod{\varpi^t\cO[G_n]}
\end{equation}
for all $n\geq 0$. 

Let $m$ be an integer such that $\mu(\theta_m)=-t$. Then $\mu(\theta_m')=0$. Setting $n$ to be $m$ in \eqref{eq: abstract cong for theta'}, we deduce from Lemma \ref{lem: corlambda}(2):  $\mu(\cor^m_0(\theta'_0))=\mu(\theta'_0)=0$. Consequently, both sides of the congruence in \eqref{eq: abstract cong for theta'} are non-zero. Hence, from Lemma \ref{lem: corlambda}(2), we deduce the equations
\[\lambda(\theta'_n)=p^n-p^{n-1}+\lambda(\theta'_{n-1})=\cdots=p^n-1+\lambda(\theta'_0)=p^n-1\]
and $\mu(\theta'_n)=\mu(\theta_0')=0$ for all $n\geq0$. Hence,  $\mu(\theta_n)= \mu(\theta_0)= t<0$ and $\lambda(\theta_n)=p^n-1$ for all $n$.  
\end{proof}
\begin{remark}\label{rem: verifying mt elts satisfy abstract assumptions}
    We shall apply Theorem \ref{thm:abstract} to the sequence of $p$-adic Mazur--Tate elements $\{\theta_n(f)\}_{n\geq0}$ attached to a $p$-ordinary Hecke eigenform $f\in S_2(\Gamma_1(N),\epsilon_f)$ with $p\nmid N$. Assumption (1) is satisfied since 
$\mu(\theta_n(f))\geq \ord_\varpi(\Omega_f/\Omega_{f,\bullet})$. Taking $\alpha$ to be the $p$-adic unit root of $x^2-a_p(f)x+\epsilon_f(p)p$ and  $\theta_n^\alpha=\epsilon_f(p)\theta_n(\phi_f^\alpha)$, by \eqref{eq:stabilization} and Lemma~\ref{lem: corlambda}(1), assumption (2) is satisfied due to our choice of normalisation by the period $\Omega_{f,\bullet}$.
    
For an elliptic curve $E$ defined over $\QQ$ that has good ordinary reduction at $p$, the assumptions can be verified similarly for the sequence $\{\theta_n(E)\}_{n\geq0}$: 
    For (1), note that if $r,s\in \mathbb{P}^1(\QQ)$, we have $\phi_E(\{r\}-\{s\})\in \delta_E^{-1}\ZZ$ for an integer $\delta_E$ independent of $r$ and $s$ (see \cite[Lemma 1.2(i)]{MR4669286}). Thus, $\mu(\theta_n(E))\geq -\ord_p(\delta_E)$ for all $n\geq0$. For (2), let $\alpha$ be the $p$-adic unit root of $x^2-a_p(E)x+p$. Then Lemma \ref{mtinteg} says that $\theta_n(\phi_E^\alpha) \in \Zp[G_n]$. Hence, we can take $\theta_n^\alpha=\theta_n(\phi_E^\alpha)$ by \eqref{eq:stabilization} and Lemma~\ref{lem: corlambda}(1).
\end{remark}
\section{Proofs of main results}\label{S: proof of main res}
In this section, we prove Theorems \ref{thmA} and \ref{thmB}.
\subsection{Proof of Theorem \ref{thmA}}\label{S:proofA}
We begin with part (1) of Theorem~\ref{thmA}.
\begin{theorem}\label{thm: Lvaldenom-version2}
    Let $E$ be an elliptic curve defined over $\QQ$ with good ordinary reduction at $p$ such that $\ord_p(L(E,1)/\Omega_{E})<0$. Then we have
    \[\mu(\theta_n(E))=\ord_p(L(E,1)/\Omega_{E})\quad\text{and }\quad\lambda(\theta_n(E))=p^n-1\]
    for all $n\geq 0$.
\end{theorem}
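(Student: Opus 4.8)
The plan is to verify the hypotheses of the abstract growth result, Theorem~\ref{thm:abstract}, for the sequence $\{\theta_n(E)\}_{n\geq0}$ and then read off the conclusion. By Remark~\ref{rem: verifying mt elts satisfy abstract assumptions}, both assumptions of Theorem~\ref{thm:abstract} are already known to hold for $\{\theta_n(E)\}$: assumption (1), the lower bound on $\mu(\theta_n(E))$, follows from the bounded denominators of the modular symbol $\phi_E$ (via \cite[Lemma 1.2(i)]{MR4669286}), and assumption (2) holds with $\alpha$ the $p$-adic unit root of $x^2-a_p(E)x+p$ and $\theta_n^\alpha=\theta_n(\phi_E^\alpha)$, since Lemma~\ref{mtinteg} guarantees $\theta_n(\phi_E^\alpha)\in\Zp[G_n]$ and the corestriction relation comes from \eqref{eq:stabilization} together with Lemma~\ref{lem: corlambda}(1).

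With the hypotheses in place, the one remaining input needed to apply Theorem~\ref{thm:abstract} is the existence of a single level $n$ at which $\mu(\theta_n(E))<0$. First I would take $n=0$ and invoke the explicit formula from Remark~\ref{rem: theta0}, namely
\[
\theta_0(E)=\left((a_p(E)-2)\frac{L(E,1)}{\Omega_E}\right)\sigma_1.
\]
Since $E$ has good ordinary reduction at $p$, the unit root $\alpha$ is a $p$-adic unit, and $a_p(E)=\alpha+p/\alpha$, so $a_p(E)-2\equiv\alpha-2\pmod p$; in any case $a_p(E)-2$ is a $p$-adic \emph{integer}, hence $\ord_p(a_p(E)-2)\geq0$. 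Therefore
\[
\mu(\theta_0(E))=\ord_p(a_p(E)-2)+\ord_p\!\left(\frac{L(E,1)}{\Omega_E}\right).
\]
The hypothesis $\ord_p(L(E,1)/\Omega_E)<0$ then forces $\mu(\theta_0(E))<0$, provided the nonnegative term $\ord_p(a_p(E)-2)$ does not overwhelm it. This is where a small amount of care is required: one must check that $a_p(E)-2\not\equiv 0$ to a valuation large enough to cancel the negative contribution, or argue around it.

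The main obstacle, then, is precisely controlling $\ord_p(a_p(E)-2)$. The cleanest resolution is to observe that the conclusion of Theorem~\ref{thm:abstract} is an equality $\mu(\theta_n(E))=\mu(\theta_0(E))$ for all $n$, and that the desired target value is $\ord_p(L(E,1)/\Omega_E)$, \emph{not} $\mu(\theta_0(E))$; so I expect the intended argument sidesteps $\theta_0$ and instead certifies $\mu(\theta_n(E))<0$ at a level $n$ where the Mazur--Tate element is genuinely non-integral for a structural reason. Concretely, since the $p$-adic $L$-function $L_p(E)=\varprojlim \alpha^{-n-1}\theta_n(\phi_E^\alpha)$ lies in $\Lambda$ by Theorem~\ref{thm: wuthrich Lp integrality}, while the \emph{unstabilised} $\theta_n(E)=\theta_n(\phi_E)$ need not be integral, the non-integrality of $\theta_0(E)$ coming from $\ord_p(L(E,1)/\Omega_E)<0$ is inherited upward through the corestriction/stabilisation relation of assumption (2). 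Thus once Theorem~\ref{thm:abstract} yields $\lambda(\theta_n(E))=p^n-1$ and $\mu(\theta_n(E))=\mu(\theta_0(E))<0$ for all $n$, the final step is to pin down $\mu(\theta_0(E))=\ord_p(L(E,1)/\Omega_E)$ exactly, which amounts to showing $\ord_p(a_p(E)-2)=0$ under our hypotheses --- equivalently $a_p(E)\not\equiv2\pmod p$. I would establish this last congruence by noting that $\ord_p(L(E,1)/\Omega_E)<0$ forces the residual representation $E[p]$ to be reducible with $E(\QQ)[p]\neq0$, so $a_p(E)\equiv\alpha\equiv1\pmod p$ (the unit root reduces to the eigenvalue of Frobenius on the trivial subquotient), whence $a_p(E)-2\equiv-1\not\equiv0\pmod p$ and $\ord_p(a_p(E)-2)=0$, completing the identification of the $\mu$-invariant.
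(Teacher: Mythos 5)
Your overall strategy matches the paper's: verify the hypotheses of Theorem~\ref{thm:abstract} via Remark~\ref{rem: verifying mt elts satisfy abstract assumptions}, use the formula $\theta_0(E)=(a_p(E)-2)\frac{L(E,1)}{\Omega_E}\sigma_1$ from Remark~\ref{rem: theta0}, and reduce everything to showing $\ord_p(a_p(E)-2)=0$. You also correctly identify that this last point is the crux. But your justification of it has a genuine gap: you claim that $\ord_p(L(E,1)/\Omega_E)<0$ \emph{forces} $E[p]$ to be reducible with $E(\QQ)[p]\neq0$, and deduce $a_p(E)\equiv 1\pmod p$ from that. That implication is only known conditionally on the Birch and Swinnerton-Dyer conjecture --- the paper itself states this explicitly ("Assuming the Birch and Swinnerton-Dyer conjecture for $E/\QQ$, the condition $\ord_p(L(E,1)/\Omega_E)<0$ implies the reducibility of $E[p]$\dots"), and the theorem is meant to be unconditional. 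So as written, your proof of the key congruence rests on an unproven conjecture.

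The paper closes this gap unconditionally using Wuthrich's integrality theorem (Theorem~\ref{thm: wuthrich Lp integrality}) evaluated at the trivial character: since
\[
L_p(E,T)\big|_{T=0}=\left(1-\frac{1}{\alpha}\right)^2\frac{L(E,1)}{\Omega_E}\in\Zp,
\]
and $\ord_p(L(E,1)/\Omega_E)<0$, the Euler factor $(1-1/\alpha)^2$ must have strictly positive valuation, which forces $\alpha\equiv 1\pmod p$, hence $a_p(E)=\alpha+p/\alpha\equiv 1\pmod p$ and $a_p(E)-2\equiv -1$ is a $p$-adic unit. This gives $\mu(\theta_0(E))=\ord_p(L(E,1)/\Omega_E)<0$ exactly, and Theorem~\ref{thm:abstract} then finishes the proof. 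Your digression suggesting one might "sidestep $\theta_0$" and certify non-integrality at some higher level is unnecessary: level $0$ is precisely where the paper works, and no higher level is needed once the unit-root congruence is in hand.
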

\begin{proof}
If $\ord_p(L(E,1)/\Omega_E)<0$, then $\alpha \equiv 1 \mod{p}$ since
\[L_p(E, T)|_{T=0} =\left(1-\frac{1}{\alpha}\right)^2\frac{L(E,1)}{\Omega_E} \in \Zp\]
by Theorem~\ref{thm: wuthrich Lp integrality}. Thus, $a_p(E)\equiv 1\mod{p}$. 
Consequently, from \eqref{eq:interpolation} it follows that 
$$\mu(\theta_0(E))=\ord_p((a_p-2)L(E,1)/\Omega_E)<0.$$ Therefore, Theorem \ref{thm:abstract} applies to the sequence $\{\theta_n(E)\}_{n\geq0}$ due to the verification carried out in Remark \ref{rem: verifying mt elts satisfy abstract assumptions}, concluding the proof of the theorem.
\end{proof}

Next, we prove the converse to Theorem~\ref{thm: Lvaldenom-version2} under the additional assumption that the $\mu$-invariant of the $p$-adic $L$-function attached to $E$ vanishes, i.e., part (2) of Theorem~\ref{thmA}.
\begin{theorem}\label{thm: converse}
    Let $E$ be an elliptic curve defined over $\QQ$ with good ordinary reduction at an odd prime $p$ with $\mu(L_p(E))=0$. Assume $\lambda(\theta_n(E))=p^n-1$ for all $n$ sufficiently large. Then 
    \[\ord_p\left(\frac{L(E,1)}{\Omega_E}\right)<0.\]
\end{theorem}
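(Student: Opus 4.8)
The plan is to prove the contrapositive via the abstract machinery of Theorem~\ref{thm:abstract}. Suppose, for contradiction, that $\ord_p(L(E,1)/\Omega_E)\geq 0$; I want to derive that $\lambda(\theta_n(E))<p^n-1$ for infinitely many $n$, contradicting the maximality hypothesis. The key structural input is the norm relation
\[
\theta_n(E)-\alpha^{-1}\cor^n_{n-1}(\theta_{n-1}(E))=\theta_n(\phi_E^\alpha),
\]
recorded in Remark~\ref{rem: verifying mt elts satisfy abstract assumptions} (via \eqref{eq:stabilization} and Lemma~\ref{lem: corlambda}(1)), where $\alpha$ is the unit root of $x^2-a_p(E)x+p$. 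The right-hand side is the Mazur--Tate element of the $p$-stabilised symbol, whose inverse limit (after dividing by $\alpha^n$) is the integral $p$-adic $L$-function $L_p(E)\in\Lambda$ by Theorem~\ref{thm: wuthrich Lp integrality}.

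First I would leverage the hypothesis $\mu(L_p(E))=0$. Since $L_p(E)=\varprojlim_n \alpha^{-(n+1)}\theta_n(\phi_E^\alpha)$ has $\mu=0$, and $\alpha$ is a $p$-adic unit, the finite-level pieces $\theta_n(\phi_E^\alpha)$ have $\mu=0$ for all large $n$; moreover, for $n$ large, $\lambda(\theta_n(\phi_E^\alpha))$ stabilises to $\lambda(L_p(E))$, which is a fixed finite number $\ell_0$ independent of $n$. This is the standard comparison between finite-level invariants and the invariants of the inverse limit, and it uses Lemma~\ref{lem: corlambda}(3) to transfer $\mu=0$ down the tower.

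Next I would analyse the three terms in the norm relation as an application of Lemma~\ref{lem:abstract}. Write $\theta_n(E)=\theta_n(\phi_E^\alpha)+\alpha^{-1}\cor^n_{n-1}(\theta_{n-1}(E))$. Under the assumed maximality $\lambda(\theta_n(E))=p^n-1$ for all large $n$, and using Lemma~\ref{lem: corlambda}(2) which gives $\lambda(\cor^n_{n-1}(\theta_{n-1}(E)))=p^n-p^{n-1}+\lambda(\theta_{n-1}(E))$, I can compute the $\lambda$-invariant of the corestriction term: if $\theta_{n-1}(E)$ is already maximal at level $n-1$, then this becomes $p^n-p^{n-1}+(p^{n-1}-1)=p^n-1$. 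Thus both $\theta_n(E)$ and the corestriction term have $\lambda=p^n-1$, while $\theta_n(\phi_E^\alpha)$ has the much smaller $\lambda=\ell_0<p^n-1$. Applying Lemma~\ref{lem:abstract} to $F_1=\theta_n(\phi_E^\alpha)$ and $F_2=\alpha^{-1}\cor^n_{n-1}(\theta_{n-1}(E))$ (whose sum is $\theta_n(E)$, of strictly smaller $\lambda$) forces $\mu(F_1)=\mu(F_2)$ and this common $\mu$ to be strictly less than $\mu(\theta_n(E))$. Since $\mu(\theta_n(\phi_E^\alpha))=0$, I conclude $\mu(\theta_n(E))<0$, and then Theorem~\ref{thm:abstract} (whose hypotheses are verified in Remark~\ref{rem: verifying mt elts satisfy abstract assumptions}) yields $\mu(\theta_0(E))<0$. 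By the interpolation formula \eqref{eq:interpolation}, $\theta_0(E)=((a_p(E)-2)L(E,1)/\Omega_E)\sigma_1$, and $\mu(\theta_0(E))<0$ forces $\ord_p(L(E,1)/\Omega_E)<0$, the desired contradiction.

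I expect the main obstacle to be the bookkeeping in the inductive comparison of $\lambda$-invariants across levels, specifically ensuring that the maximality hypothesis at level $n$ genuinely propagates to show the corestriction term $F_2$ has $\lambda=p^n-1$ rather than merely bounding it, and that the stabilised value $\ell_0=\lambda(L_p(E))$ really is eventually constant and strictly below $p^n-1$ for the relevant range of $n$. The cleanest route is probably to fix a single large $n$ for which maximality holds at both levels $n$ and $n-1$ and $\theta_n(\phi_E^\alpha)$ has already stabilised, run Lemma~\ref{lem:abstract} once to extract $\mu(\theta_n(E))<0$, and then immediately invoke Theorem~\ref{thm:abstract} to push everything back to level $0$; this sidesteps any delicate multi-level induction and isolates the one genuinely new inequality $\lambda(\theta_n(\phi_E^\alpha))<p^n-1$.
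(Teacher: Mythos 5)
Your overall strategy is the same as the paper's: use the norm relation $\theta_n(\phi_E^\alpha)=\theta_n(E)-\alpha^{-1}\cor^n_{n-1}(\theta_{n-1}(E))$ together with Lemma~\ref{lem:abstract} to force $\mu(\theta_n(E))<0$, then invoke Theorem~\ref{thm:abstract} and the interpolation formula \eqref{eq:interpolation}. However, your application of Lemma~\ref{lem:abstract} is inverted, and as written the step fails. You take $F_1=\theta_n(\phi_E^\alpha)$ and $F_2=\alpha^{-1}\cor^n_{n-1}(\theta_{n-1}(E))$, so that $F_1+F_2=\theta_n(E)$. But the hypothesis of Lemma~\ref{lem:abstract} is $\lambda(F_1+F_2)<\lambda(F_1),\lambda(F_2)$, and in your setup the sum $\theta_n(E)$ has the \emph{maximal} $\lambda$-invariant $p^n-1$, strictly larger than $\lambda(F_1)=\lambda(L_p(E))$; your parenthetical ``whose sum is $\theta_n(E)$, of strictly smaller $\lambda$'' is false. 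Moreover, even granting the lemma's conclusion for your decomposition, it would read $\mu(F_1+F_2)>\mu(F_1)=\mu(F_2)$ with $\mu(F_1)=0$, i.e.\ $\mu(\theta_n(E))>0$ --- the opposite of the inequality $\mu(\theta_n(E))<0$ that you then assert and that the rest of the argument needs. A related symptom is your claim that Lemma~\ref{lem:abstract} forces the common $\mu$ of $F_1,F_2$ to be strictly \emph{less} than that of the sum; the lemma says strictly less, but for the summands, not the sum, and you have attached the conclusion to the wrong element.

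The fix is a one-line swap, and it is exactly what the paper does: take $F_1=\theta_n(E)$ and $F_2=-\alpha^{-1}\cor^n_{n-1}(\theta_{n-1}(E))$, so that $F_1+F_2=\theta_n(\phi_E^\alpha)$. For $n$ large, $\lambda(F_1)=p^n-1$ by hypothesis, $\lambda(F_2)=p^n-p^{n-1}+\lambda(\theta_{n-1}(E))=p^n-1$ by Lemma~\ref{lem: corlambda}(2) and maximality at level $n-1$, while $\lambda(F_1+F_2)=\lambda(L_p(E))<p^n-1$. Now Lemma~\ref{lem:abstract} applies and gives $0=\mu(\theta_n(\phi_E^\alpha))>\mu(\theta_n(E))$, after which Theorem~\ref{thm:abstract} (with the verifications of Remark~\ref{rem: verifying mt elts satisfy abstract assumptions}) yields $\mu(\theta_0(E))=\ord_p\bigl((a_p(E)-2)L(E,1)/\Omega_E\bigr)<0$, and $a_p(E)\in\Zp$ gives the conclusion. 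Your closing remarks about fixing a single large $n$ with maximality at levels $n$ and $n-1$ and with $\lambda(\theta_n(\phi_E^\alpha))$ already stabilised are the right way to avoid any induction; only the orientation of the decomposition needs to be corrected.
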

\begin{proof}
Since $L_p(E)$ can be realized as the limit of $\theta_n(\phi_E^\alpha)$ as $n\rightarrow\infty$, 
we have $\mu(\theta_n(\phi_E^\alpha))=\mu(L_p(E))=0$ and $\lambda(\theta_n(\phi_E^\alpha)) =\lambda(L_p(E))< p^n-1$ for $n$ large enough.  Recall that 
 \begin{equation}\label{eq: pstabilised MT elts}
       \theta_n(\phi_E^\alpha)=\theta_n(E)-\frac{1}{\alpha}\cor_{n-1}^n(\theta_{n-1}(E)) .
    \end{equation}
Applying Lemma \ref{lem:abstract} gives $0=\mu(\theta_n(\phi_E^\alpha)) > \mu(\theta_n(E))$.  Consequently, Theorem \ref{thm:abstract} applies and we deduce $\mu(\theta_0(E)) < 0$.
By \eqref{eq:interpolation}, this is equivalent to $\ord_p((a_p-2) L(E,1)/\Omega_E) < 0$. As $a_p(E) \in \Zp$, we have  $\ord_p(L(E,1)/\Omega_E) < 0$ as desired.
\end{proof}

In a completely analogous manner, we obtain the following extension of Theorems \ref{thm: Lvaldenom-version2} and \ref{thm: converse} to modular forms of weight 2.  Recall that the period $\Omega_{f,\bullet}$ is chosen so that $L_p(f)$ is integral.
\begin{theorem}\label{thm: ThA for mod forms}
    Let $f \in S_2(\Gamma_1(N),\epsilon_f)$ be a Hecke eigenform and $p$ an odd prime such that $p\nmid N$ and $f$ is ordinary at $p$. Let $\varpi$ denote a uniformizer for $K_f$, the completion of the field of Fourier coefficients of $f$ at a prime above $p$.
    \begin{itemize}[leftmargin=0.5cm] 
    \item[(1)] If $\ord_\varpi(L(f,1)/\Omega_{f,\bullet})<0$, then
    \[\mu(\theta_n(f))=\ord_\varpi(L(f,1)/\Omega_{f,\bullet})\quad\text{and }\quad\lambda(\theta_n(f))=p^n-1\]
    for all $n\geq0$.
    \item[(2)] If $\mu(L_p(f))=0$ and $\lambda(\theta_n(f))=p^n-1$
    for $n\gg0$, then
    \[\ord_\varpi\left(\frac{L(f,1)}{\Omega_{f,\bullet}}\right)<0.\]
\end{itemize}
\end{theorem}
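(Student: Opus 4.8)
The plan is to follow the proofs of Theorems~\ref{thm: Lvaldenom-version2} and \ref{thm: converse} almost verbatim; the only genuinely new input is the bookkeeping of the nebentypus $\epsilon_f$ in the interpolation at the trivial character. In both parts, the hypotheses of Theorem~\ref{thm:abstract} for the sequence $\{\theta_n(f)\}_{n\ge0}$ are precisely those already verified in Remark~\ref{rem: verifying mt elts satisfy abstract assumptions} (with $\alpha$ the $p$-adic unit root of $x^2-a_p(f)x+\epsilon_f(p)p$, integrality of the $p$-stabilised elements coming from the modular-form version of Lemma~\ref{mtinteg} together with \eqref{eq:stabilization} and Lemma~\ref{lem: corlambda}(1)). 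Thus, as in the elliptic curve case, the whole argument reduces to controlling the sign of $\mu(\theta_0(f))$.

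For part (1), I would start from the $n=0$ formula $\theta_0(f)=(a_p(f)-\epsilon_f(p)-1)\,\phi_f(\{\infty\}-\{0\})$ of Remark~\ref{rem: theta0}, noting that $\ord_\varpi(\phi_f(\{\infty\}-\{0\}))=\ord_\varpi(L(f,1)/\Omega_{f,\bullet})$. The key point is that the scalar $a_p(f)-\epsilon_f(p)-1$ is a $\varpi$-unit whenever $\ord_\varpi(L(f,1)/\Omega_{f,\bullet})<0$. Indeed, by the choice of $\Omega_{f,\bullet}$ the interpolation value $L_p(f)|_{T=0}=(1-\alpha^{-1})(1-\epsilon_f(p)\alpha^{-1})\,L(f,1)/\Omega_{f,\bullet}$ lies in $\cO_f$, so a negative valuation of the $L$-value forces $\ord_\varpi\big((1-\alpha^{-1})(1-\epsilon_f(p)\alpha^{-1})\big)>0$. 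Since $(1-\alpha^{-1})(1-\epsilon_f(p)\alpha^{-1})=\alpha^{-2}(\alpha-1)(\alpha-\epsilon_f(p))$ and $\alpha\in\cO_f^\times$, this means $\alpha\equiv1$ or $\alpha\equiv\epsilon_f(p)\pmod{\varpi}$. Using $\beta=\epsilon_f(p)p/\alpha\equiv0$, so that $a_p(f)-\epsilon_f(p)-1\equiv\alpha-\epsilon_f(p)-1\pmod{\varpi}$, the first case yields $-\epsilon_f(p)$ and the second yields $-1$, both of which are units. Hence $\mu(\theta_0(f))=\ord_\varpi(L(f,1)/\Omega_{f,\bullet})<0$, and Theorem~\ref{thm:abstract} gives the stated formulas for all $n\ge0$.

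For part (2), I would argue exactly as in Theorem~\ref{thm: converse}. Since $L_p(f)$ is the limit of the $\theta_n(\phi_f^\alpha)$ up to the unit factors $\alpha^{-(n+1)}$, the hypothesis $\mu(L_p(f))=0$ gives $\mu(\theta_n(\phi_f^\alpha))=0$ and $\lambda(\theta_n(\phi_f^\alpha))=\lambda(L_p(f))<p^n-1$ for $n\gg0$. Feeding the identity $\theta_n(\phi_f^\alpha)=\theta_n(f)-\tfrac{\epsilon_f(p)}{\alpha}\cor_{n-1}^n(\theta_{n-1}(f))$ into Lemma~\ref{lem:abstract} (where $\lambda(\cor_{n-1}^n(\theta_{n-1}(f)))=p^n-1$ by Lemma~\ref{lem: corlambda}(2) and the maximality hypothesis at level $n-1$) shows $\mu(\theta_n(f))<0$, whence Theorem~\ref{thm:abstract} gives $\mu(\theta_0(f))<0$. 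Finally, $\ord_\varpi(\theta_0(f))=\ord_\varpi(a_p(f)-\epsilon_f(p)-1)+\ord_\varpi(L(f,1)/\Omega_{f,\bullet})$ with $a_p(f),\epsilon_f(p)\in\cO_f$ forces $\ord_\varpi(L(f,1)/\Omega_{f,\bullet})<0$, as claimed.

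The one place demanding care---rather than an obstacle per se---is the Euler-factor computation in part (1): in the elliptic curve case the factor was the perfect square $(1-\alpha^{-1})^2$, immediately forcing $\alpha\equiv1$, whereas here the two roots $1$ and $\epsilon_f(p)$ of $(\alpha-1)(\alpha-\epsilon_f(p))$ must both be treated, and one must check separately that $a_p(f)-\epsilon_f(p)-1$ remains a unit in each case. Keeping track of the nebentypus in the corestriction relation (the factor $\epsilon_f(p)/\alpha$ in place of $1/\alpha$) and working with the uniformizer $\varpi$ of $K_f$ rather than $p$ are the remaining, entirely routine, adjustments.
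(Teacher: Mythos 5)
Your proposal is correct and follows exactly the route the paper intends: the paper gives no separate proof of this theorem, stating only that it follows ``in a completely analogous manner'' from Theorems~\ref{thm: Lvaldenom-version2} and \ref{thm: converse}, and your argument is precisely that adaptation. You have also correctly identified and handled the one genuinely new point---that the Euler factor $(1-\alpha^{-1})(1-\epsilon_f(p)\alpha^{-1})$ is no longer a perfect square, so one must check in both cases $\alpha\equiv 1$ and $\alpha\equiv\epsilon_f(p)$ that $a_p(f)-\epsilon_f(p)-1$ remains a $\varpi$-unit---which the paper leaves implicit.
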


\subsection{Proof of Theorem \ref{thmB}}\label{S:proofB}
In order to prove Theorem~\ref{thmB}, we discuss how the integrality of Mazur--Tate elements influences the corresponding Iwasawa invariants  at good ordinary primes. 
\begin{theorem}\label{thm: mt invariants are Lp invariants}
    Let $E$ be an elliptic curve over $\QQ$ with good ordinary reduction at an odd prime $p$ with $\mu(L_p(E))=0$. Suppose that $\theta_n(E) \in \Lambda_n$ for all $n\gg 0$. Then 
    \[\mu(\theta_n(E))=0 \quad\text{and}\quad \lambda(\theta_n(E))=\lambda(L_p(E))\]
    for all $n$ sufficiently large.
\end{theorem}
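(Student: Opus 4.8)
The plan is to relate the Mazur--Tate elements $\theta_n(E)$ to the $p$-stabilised elements $\theta_n(\phi_E^\alpha)$, whose Iwasawa invariants are controlled by the $p$-adic $L$-function $L_p(E)$. Recall from \eqref{eq: pstabilised MT elts} that
\[
\theta_n(\phi_E^\alpha)=\theta_n(E)-\frac{1}{\alpha}\cor_{n-1}^n(\theta_{n-1}(E)),
\]
where $\alpha\in\Zp^\times$ is the unit root. Since $L_p(E)=\varprojlim_n \alpha^{-(n+1)}\theta_n(\phi_E^\alpha)$ and $\alpha$ is a $p$-adic unit, the hypothesis $\mu(L_p(E))=0$ forces $\mu(\theta_n(\phi_E^\alpha))=\mu(L_p(E))=0$ and $\lambda(\theta_n(\phi_E^\alpha))=\lambda(L_p(E))$ for all $n$ sufficiently large (the finite-level invariants stabilise to those of the inverse limit once $n$ exceeds the $\lambda$-invariant). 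The goal is then to transfer these equalities from $\theta_n(\phi_E^\alpha)$ to $\theta_n(E)$ itself, using the integrality hypothesis $\theta_n(E)\in\Lambda_n$.

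First I would establish that $\mu(\theta_n(E))=0$ for $n\gg0$. The integrality hypothesis gives $\mu(\theta_n(E))\geq 0$, so it suffices to rule out $\mu(\theta_n(E))>0$. Here I would use Lemma~\ref{lem: corlambda}(3): if the $\mu$-invariant of a projection vanishes, then so does the $\mu$-invariant upstairs. More directly, I expect to argue via the relation above. Since $\cor_{n-1}^n$ preserves the $\mu$-invariant (Lemma~\ref{lem: corlambda}(2)) and $\alpha$ is a unit, the term $\alpha^{-1}\cor_{n-1}^n(\theta_{n-1}(E))$ has the same $\mu$-invariant as $\theta_{n-1}(E)$, which is $\geq 0$ by integrality. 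Combined with $\mu(\theta_n(\phi_E^\alpha))=0$ and the fact that $\mu$ of a sum is at least the minimum of the $\mu$-invariants, a short argument shows $\mu(\theta_n(E))=0$ (otherwise both terms on the right would have strictly positive $\mu$-invariant, contradicting $\mu(\theta_n(\phi_E^\alpha))=0$).

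Next I would pin down the $\lambda$-invariant. With $\mu(\theta_n(E))=0$ established, the key observation is that $\lambda(\cor_{n-1}^n(\theta_{n-1}(E)))=p^n-p^{n-1}+\lambda(\theta_{n-1}(E))$ by Lemma~\ref{lem: corlambda}(2), and since $\lambda(\theta_{n-1}(E))\geq 0$ this $\lambda$-invariant is at least $p^n-p^{n-1}$, which grows without bound. For $n$ large, $\lambda(L_p(E))$ is a fixed constant strictly smaller than $p^n-p^{n-1}$. Consequently, in the relation $\theta_n(E)=\theta_n(\phi_E^\alpha)+\alpha^{-1}\cor_{n-1}^n(\theta_{n-1}(E))$, the term $\theta_n(\phi_E^\alpha)$ has $\mu=0$ and $\lambda=\lambda(L_p(E))$, while the corestriction term has $\mu=0$ (by integrality of $\theta_{n-1}(E)$, if $\mu$ vanishes there) but $\lambda$ far larger. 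Since the two summands have the same $\mu$-invariant but the corestriction term has strictly larger $\lambda$-invariant, the $\lambda$-invariant of the sum is governed by the smaller one, giving $\lambda(\theta_n(E))=\lambda(\theta_n(\phi_E^\alpha))=\lambda(L_p(E))$.

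The main obstacle I anticipate is making the inductive bookkeeping on the $\mu$- and $\lambda$-invariants of $\theta_{n-1}(E)$ airtight: the clean comparison in the last step requires knowing $\mu(\theta_{n-1}(E))=0$ (not merely $\geq 0$) in order to invoke Lemma~\ref{lem: corlambda}(2) and conclude that the corestriction term shares the same $\mu$-invariant as $\theta_n(\phi_E^\alpha)$. I would therefore set up a simultaneous induction establishing $\mu(\theta_n(E))=0$ and $\lambda(\theta_n(E))=\lambda(L_p(E))$ for all $n$ beyond some threshold $n_0$, feeding the conclusion at level $n-1$ into the comparison at level $n$. The base case uses the stabilisation of $\lambda(\theta_n(\phi_E^\alpha))$ to $\lambda(L_p(E))$, and the comparison lemma for sums with distinct $\lambda$-invariants but equal $\mu$-invariants (a standard fact, dual to Lemma~\ref{lem:abstract}) closes the induction.
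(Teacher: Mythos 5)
Your overall strategy---the $p$-stabilisation relation $\theta_n(\phi_E^\alpha)=\theta_n(E)-\alpha^{-1}\cor_{n-1}^n(\theta_{n-1}(E))$, the stabilisation of the invariants of $\theta_n(\phi_E^\alpha)$ to those of $L_p(E)$, and Lemma~\ref{lem: corlambda}(2) to see that the corestriction term has unboundedly large $\lambda$-invariant---is exactly the paper's. Your propagation step is also correct: if $\mu(\theta_{n-1}(E))=0$, then for $n$ large the two summands on the right of $\theta_n(E)=\theta_n(\phi_E^\alpha)+\alpha^{-1}\cor_{n-1}^n(\theta_{n-1}(E))$ have equal (zero) $\mu$-invariants and distinct $\lambda$-invariants, so $\mu(\theta_n(E))=0$ and $\lambda(\theta_n(E))=\min\{\lambda(L_p(E)),\,p^n-p^{n-1}+\lambda(\theta_{n-1}(E))\}=\lambda(L_p(E))$.

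The one genuine slip is in your ``short argument'' for $\mu(\theta_n(E))=0$, and it sits precisely where the base case of your induction lives. The parenthetical claim ``otherwise both terms on the right would have strictly positive $\mu$-invariant'' is false: $\mu(\theta_n(E))>0$ says nothing about $\mu(\cor_{n-1}^n(\theta_{n-1}(E)))=\mu(\theta_{n-1}(E))$, and if the latter were zero the sum would still have $\mu=0$ with no contradiction. What the relation actually gives---since $\theta_n(\phi_E^\alpha)$ is a difference of two elements of $\Lambda_n$ and has $\mu=0$---is only that \emph{at least one} of $\mu(\theta_n(E))$ and $\mu(\theta_{n-1}(E))$ vanishes. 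That weaker statement is nonetheless enough: it produces arbitrarily large $n'$ with $\mu(\theta_{n'}(E))=0$, which is the base case your propagation step then carries to all larger $n$. This is exactly how the paper argues (it phrases the propagation contrapositively: $\mu(\theta_{n'+1}(E))>0$ would force $\lambda(L_p(E))=p^{n'+1}-p^{n'}+\lambda(\theta_{n'}(E))$, impossible for $n'$ large). Finally, your simultaneous induction is heavier than needed: the inequality $\lambda(\cor_{n-1}^n(\theta_{n-1}(E)))\geq p^n-p^{n-1}>\lambda(L_p(E))$ only uses $\lambda(\theta_{n-1}(E))\geq 0$, so it suffices to propagate $\mu=0$ and read off the $\lambda$-invariant at the end, as the paper does.
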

\begin{proof}
    This proof is based on the argument used in \cite[proof of Proposition 3.7]{PW}. Note that $\theta_{n-1}(E) \in \Lambda_{n-1}$ implies $\cor_{n-1}^{n}(\theta_{n-1}(E)) \in \Lambda_n$. Further, $\mu(\theta_n(\phi_E^\alpha))=\mu(L_p(E))=0$ and $\lambda(\theta_n(\phi_E^\alpha))=\lambda(L_p(E))$ for all $n\gg0$ since $\theta_n(\phi_E^\alpha)$ converges to $L_p(E)$ as $n\rightarrow \infty$. 

    Assume that $n$ is chosen so that $\mu(\theta_n(\phi_E^\alpha))=0$.
As the right-hand side of \eqref{eq: pstabilised MT elts} consists of two elements in $\Lambda_n$, one of them must have zero $\mu$-invariant. Moreover, $\mu(\cor_{n-1}^{n}(\theta_{n-1}(E)))=\mu(\theta_{n-1}(E))$ by Lemma \ref{lem: corlambda}(2). Hence, we deduce that $\mu(\theta_{n'}(E))=0$ for some $n'$. Further, $n'$ can be arbitrarily large. Suppose that $n'$ is chosen so that $\mu(\theta_{n'}(E))=0$. If $\mu(\theta_{n'+1}(E))>0$, this would imply that
    \[\theta_{n'+1}(\phi_E^\alpha)\equiv -\frac{1}{\alpha}\cor_{n'}^{n'+1}(\theta_{n'}(E)) \mod{p\Lambda_{n'+1}}.\]
    Since both sides of this congruence have trivial $\mu$-invariants, Lemma \ref{lem: corlambda}(2) gives  
    \[\lambda(\theta_{n'+1}(\phi_E^\alpha))=p^{n'+1}-p^{n'}+ \lambda(\theta_{n'}(E)),\]
    which cannot hold when $n'$ is sufficiently large  since $\lambda(\theta_{n'+1}(\phi_E^\alpha))=\lambda(L_p(E))$ is bounded. So, we must have $\mu(\theta_{n'+1}(E))=0$. In particular, $\mu(\theta_{n}(E))=0$ for $n\gg0$. In addition, 
    \[
\lambda(L_p(E, T))=    \lambda(\theta_n(\phi^\alpha_E))<\lambda(\cor_{n-1}^n(\theta_{n-1})= p^n-p^{n-1}+\lambda(\theta_{n-1}(E))
    \]
    for $n\gg0$. Hence, from Equation \eqref{eq: pstabilised MT elts}, we deduce
    \[\lambda(\theta_n(E)) = \min\left\{\lambda(\theta_n(\phi^\alpha_E)),\ \lambda(\cor_{n-1}^n(\theta_{n-1}(E)))\right\}=\lambda(\theta_n(\phi^\alpha_E))=\lambda(L_p(E.T)),\]
    as desired.
\end{proof}
\begin{remark}\label{rk: remark on maximality for modforms}
Let $f \in S_2(\Gamma_1(N),\epsilon_f)$ be a $p$-ordinary Hecke eigenform with $p\nmid N$. Let $\alpha$ be the $p$-adic unit root of $x^2-a_p(f)+\epsilon_f(p)p$ and let $f_\alpha$ denote the ordinary $p$-stabilisation of $f$. Let $\varpi$ denote a uniformizer of $K_f$ and $\cO_f$ denote the ring of integers of $K_f$. Recall that we normalise $\theta_n(f)$ by the periods $\Omega_{f,\bullet}^\pm$, which ensures the integrality of $L_p(f)$. If $\mu(L_p(f))=0$ and $\theta_n(f) \in \cO_f[G_n]$, then 
\[\mu(\theta_n(f))=0 \quad\text{and}\quad \lambda(\theta_n(f))=\lambda(L_p(f))\]
for all $n$ sufficiently large. This can be shown by an argument similar to the one used in the proof of Theorem \ref{thm: mt invariants are Lp invariants}.

In this article, we focus on weight 2 forms. Mazur--Tate elements can be defined for cusp forms of higher weight (see \cite[\S~2]{PW}) and if the  modular symbols are normalised by the cohomological periods, one can obtain the same result as in Theorem \ref{thm: mt invariants are Lp invariants}. More precisely, if the $\mu$-invariant of the associated $p$-adic $L$-function vanishes, the $\lambda$-invariant of the Mazur--Tate element always matches the $\lambda$-invariant of the $p$-adic $L$-function for $n\gg0$.
\end{remark}

We now give the proof of Theorem~\ref{thmB}.
\begin{theorem}\label{thm: classify}
    Let $E$ be an elliptic curve over $\QQ$ with good ordinary reduction at an odd prime $p$. Assume $E$ is isogenous over $\QQ$ to an elliptic curve $E'/\QQ$ with $\mu(L_p(E'))=0$. Then, we have the following dichotomy:

\begin{itemize}
        \item[(a)] $\theta_n(E') \in \Lambda_n$ for all $n \geq 0$.  In this case, $$\mu(\theta_n(E))=\mu(L_p(E)) \quad \text{and} \quad \lambda(\theta_n(E))=\lambda(L_p(E))
        $$
        for $n$ sufficiently large, or
        \item[(b)] $\theta_n(E') \notin \Lambda_n$ for all $n \geq 0$.  In this case,
        $$
        \mu(\theta_n(E))=\ord_p(L(E,1)/\Omega_E) \quad \text{and} \quad \lambda(\theta_n(E))=p^n-1
        $$
        for all $n \geq 0$.
    \end{itemize}
       Furthermore, case (b) occurs if and only if $\ord_p(L(E',1)/\Omega_{E'})<0$.
\end{theorem}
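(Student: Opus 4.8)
The plan is to transfer everything to the curve $E'$, where the hypothesis $\mu(L_p(E'))=0$ lets us invoke the results of the previous sections, and then descend back to $E$ through the precise relationship between the two sets of Mazur--Tate elements. Since $E$ and $E'$ are isogenous over $\QQ$ they are attached to the same newform $f=f_E=f_{E'}$, so the associated plus/minus modular symbols satisfy $\phi_{E'}^{\pm}=(\Omega_E^{\pm}/\Omega_{E'}^{\pm})\,\phi_E^{\pm}$. Crucially, the projection $\overline{\QQ}_p[\cG_{n+1}]\to\overline{\QQ}_p[G_n]$ defining $\theta_n$ annihilates the prime-to-$p$ torsion of $\cG_{n+1}$, which contains complex conjugation $\sigma_{-1}$; pairing $\sigma_a$ with $\sigma_{-a}$ then shows that the minus part of $\vartheta_{n+1}$ cancels, so both $\theta_n$ and $L_p$ depend only on the plus symbol. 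Hence $\theta_n(E')=c\,\theta_n(E)$ and (by the same computation applied to $\phi^\alpha$) $L_p(E')=c\,L_p(E)$ for the single scalar $c=\Omega_E^{+}/\Omega_{E'}^{+}\in\QQ^{\times}$, independent of $n$. Setting $v=\ord_p(c)$, this yields $\lambda(\theta_n(E'))=\lambda(\theta_n(E))$ and $\lambda(L_p(E'))=\lambda(L_p(E))$ (recovering the isogeny-invariance of $\lambda$), together with the uniform shifts $\mu(\theta_n(E'))=\mu(\theta_n(E))+v$ and $\mu(L_p(E'))=\mu(L_p(E))+v$.

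I would then obtain the dichotomy by applying Theorem~\ref{thm:abstract} to $\{\theta_n(E')\}$, whose hypotheses hold by Remark~\ref{rem: verifying mt elts satisfy abstract assumptions}. Because $\theta_n(E')\in\Lambda_n$ is equivalent to $\mu(\theta_n(E'))\geq0$, the theorem forces exactly the stated alternative: either $\mu(\theta_n(E'))\geq0$ for all $n$ (case (a)), or $\mu(\theta_n(E'))<0$ for some $n$ and hence, by the theorem, for every $n$, in which case we moreover get $\mu(\theta_n(E'))=\mu(\theta_0(E'))$ and $\lambda(\theta_n(E'))=p^n-1$ for all $n$ (case (b)).

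For case (a), the curve $E'$ satisfies $\theta_n(E')\in\Lambda_n$ for all $n$ and $\mu(L_p(E'))=0$, so Theorem~\ref{thm: mt invariants are Lp invariants} gives $\mu(\theta_n(E'))=0$ and $\lambda(\theta_n(E'))=\lambda(L_p(E'))$ for $n\gg0$. Applying the shifts above, $\mu(\theta_n(E))=-v=\mu(L_p(E))$ and $\lambda(\theta_n(E))=\lambda(L_p(E))$ for $n\gg0$. For the equivalence with the $L$-value condition I would use the interpolation formula \eqref{eq:interpolation}: in case (b) we have $\mu(\theta_0(E'))<0$ and $\theta_0(E')=(a_p(E')-2)L(E',1)/\Omega_{E'}$ with $a_p(E')\in\ZZ$, forcing $\ord_p(L(E',1)/\Omega_{E'})<0$; conversely $\ord_p(L(E',1)/\Omega_{E'})<0$ lands us in case (b) by Theorem~\ref{thm: Lvaldenom-version2}, which also supplies $\mu(\theta_n(E'))=\ord_p(L(E',1)/\Omega_{E'})$ for all $n$. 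Feeding this into the shift and using $L(E,1)=L(E',1)$ gives
\[
\mu(\theta_n(E))=\ord_p\!\left(\frac{L(E',1)}{\Omega_{E'}}\right)-\ord_p\!\left(\frac{\Omega_E}{\Omega_{E'}}\right)=\ord_p\!\left(\frac{L(E,1)}{\Omega_E}\right)
\]
for all $n$, while $\lambda(\theta_n(E))=\lambda(\theta_n(E'))=p^n-1$ by isogeny-invariance, completing case (b).

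The one genuinely delicate point is the opening reduction to a single scalar $c$: I expect the main care to be needed in verifying that the minus symbol truly drops out of $\theta_n$ and that the very same $c$ governs both $\theta_n$ and $L_p$. Once this is secured, each case is a short combination of Theorems~\ref{thm:abstract}, \ref{thm: Lvaldenom-version2} and \ref{thm: mt invariants are Lp invariants} with the interpolation formula, and no further input is required.
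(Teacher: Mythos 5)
Your proposal is correct and follows essentially the same route as the paper: reduce to $E'$ via the period ratio (the paper states the relation $\theta_n(E)=\theta_n(E')\cdot\Omega_{E'}/\Omega_E$ directly, leaving implicit the cancellation of the minus part that you spell out), obtain the dichotomy from Theorem~\ref{thm:abstract} together with Remark~\ref{rem: verifying mt elts satisfy abstract assumptions}, and conclude via Theorem~\ref{thm: mt invariants are Lp invariants} in case (a) and Theorems~\ref{thm: Lvaldenom-version2} and \ref{thm: converse} in case (b). The only cosmetic difference is that you rederive the conclusion of Theorem~\ref{thm: converse} from the interpolation formula \eqref{eq:interpolation} rather than citing it.
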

\begin{proof}
Since $\theta_n(E)=\theta_n(E')\cdot\Omega_{E'}/\Omega_E$ and  $L_p(E,T)=L_p(E',T)\cdot\Omega_{E'}/\Omega_E$, we have
\[
\lambda(\theta_n(E))=\lambda(\theta_n(E'))\quad \text{ and }\quad\mu(\theta_n(E))-\mu(L_p(E,T))=\mu(\theta_n(E'))-\mu(L_p(E',T)).\] Thus, we may assume $E=E'$.

Suppose $\theta_{n}(E) \notin \Lambda_{n}$ for some $n$. In other words,   $\mu(\theta_n(E))<0$. Theorem \ref{thm:abstract} combined with Remark~\ref{rem: verifying mt elts satisfy abstract assumptions} implies that the conditions described in case (b) hold. In addition, Theorem~\ref{thm: converse} tells us that $\ord_p(L(E,1)/\Omega_E)<0$ in this case.

Otherwise, suppose that $\theta_n(E)\in \Lambda_n$ for all $n\ge0$. In this case, Theorem \ref{thm: mt invariants are Lp invariants} implies that the conditions described in case (a) hold. Furthermore, the contrapositive of Theorem~\ref{thm: Lvaldenom-version2} tells us that $\ord_p(L(E,1)/\Omega_E)\ge0$.
\end{proof}
\begin{remark}
Assume that  $\mu(L_p(E))=0$.
    Theorem~\ref{thm: classify} implies that $\theta_n(E) \in \Lambda_n$ for all good ordinary primes $p>2$ except when $\ord_p(L(E,1)/\Omega_E)<0$. If $p>7$, then Mazur's theorem on torsion groups of elliptic curves over $\QQ$, which says that $E(\QQ)[p]=0$. In particular, the Birch--Swinnerton-Dyer conjecture predicts $\ord_p(L(E,1)/\Omega_E)\ge0$. Therefore, we expect that only case (a) of Theorem~\ref{thm: classify} occurs when $p>7$.
\end{remark}

One can obtain the analogue of Theorem \ref{thm: classify} for a $p$-ordinary Hecke eigenform $f \in S_2(\Gamma_1(N),\epsilon_f)$ with $p\nmid N$ in the same way, combining the previous argument with Remark \ref{rk: remark on maximality for modforms}: 
\begin{theorem}\label{thm: ThB for mod forms}
Let $f \in S_2(\Gamma_1(N),\epsilon_f)$ be a $p$-ordinary Hecke eigenform with $p\nmid N$. 
Suppose $\Omega_{f,\bullet}$ is chosen so that $\mu(L_p(f))=0$. We have the following dichotomy:
\begin{itemize}
        \item[(a)] $\theta_n(f) \in \cO_f[G_n]$ for all $n \geq 0$.  In this case, 
        $$\mu(\theta_n(f))=\mu(L_p(f))=0 \quad \text{and} \quad \lambda(\theta_n(f))=\lambda(L_p(f))
        $$
        for $n$ sufficiently large, or
        \item[(b)] $\theta_n(f) \notin \cO_f[G_n]$ for all $n\geq0$.  In this case, 
        $$\mu(\theta_n(f))=\ord_p(L(f,1)/\Omega_{f,\bullet})<0 \quad \text{and} \quad \lambda(\theta_n(f))=p^n-1.
        $$
    \end{itemize}
\end{theorem}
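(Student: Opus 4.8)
The plan is to transcribe the proof of Theorem~\ref{thm: classify} line by line, replacing each elliptic-curve input by its weight-two modular-form counterpart. The one structural simplification is that there is no isogeny class to manage: whereas Theorem~\ref{thm: classify} first reduces to $E=E'$ using isogeny invariance of $\lambda$, here I argue directly with $f$ and with the fixed period $\Omega_{f,\bullet}$ (chosen, as in the hypothesis, so that $\mu(L_p(f))=0$). The whole argument then splits according to whether $\theta_n(f)\in\cO_f[G_n]$ for all $n$, i.e.\ according to the sign of $\mu(\theta_n(f))$.

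The starting point is that the sequence $\{\theta_n(f)\}_{n\ge0}$ satisfies the hypotheses of Theorem~\ref{thm:abstract}; this is exactly the content of the first paragraph of Remark~\ref{rem: verifying mt elts satisfy abstract assumptions}. There, assumption~(1) holds because $\mu(\theta_n(f))\ge\ord_\varpi(\Omega_f/\Omega_{f,\bullet})$ is bounded below, and assumption~(2) holds by taking $\alpha$ to be the $p$-adic unit root of $x^2-a_p(f)x+\epsilon_f(p)p$ and $\theta_n^\alpha=\epsilon_f(p)\theta_n(\phi_f^\alpha)$, using the $p$-stabilisation identity \eqref{eq:stabilization} together with Lemma~\ref{lem: corlambda}(1). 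For case~(b) I would then suppose $\theta_n(f)\notin\cO_f[G_n]$ for some $n$, i.e.\ $\mu(\theta_n(f))<0$. Theorem~\ref{thm:abstract} immediately upgrades this single instance of non-integrality to all levels, forcing $\lambda(\theta_n(f))=p^n-1$ and $\mu(\theta_n(f))=\mu(\theta_0(f))<0$ for every $n\ge0$. Feeding the maximality together with the standing hypothesis $\mu(L_p(f))=0$ into Theorem~\ref{thm: ThA for mod forms}(2) yields $\ord_\varpi(L(f,1)/\Omega_{f,\bullet})<0$, and Theorem~\ref{thm: ThA for mod forms}(1) then identifies the value $\mu(\theta_n(f))=\ord_\varpi(L(f,1)/\Omega_{f,\bullet})$, giving precisely the invariants displayed in case~(b).

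For case~(a) I would suppose instead that $\theta_n(f)\in\cO_f[G_n]$ for all $n$. Here the relevant input is Remark~\ref{rk: remark on maximality for modforms}, whose hypotheses $\mu(L_p(f))=0$ and $\theta_n(f)\in\cO_f[G_n]$ are exactly those in force; it gives $\mu(\theta_n(f))=0=\mu(L_p(f))$ and $\lambda(\theta_n(f))=\lambda(L_p(f))$ for $n\gg0$, which is case~(a). Finally, the asserted equivalence ``case~(b) occurs if and only if $\ord_\varpi(L(f,1)/\Omega_{f,\bullet})<0$'' is read off from the two cases: case~(b) produces $\ord_\varpi(L(f,1)/\Omega_{f,\bullet})<0$ as above, while in case~(a) the contrapositive of Theorem~\ref{thm: ThA for mod forms}(1) forces $\ord_\varpi(L(f,1)/\Omega_{f,\bullet})\ge0$.

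I do not expect a serious obstacle, since every nontrivial ingredient (the propagation mechanism of Theorem~\ref{thm:abstract}, its verification in Remark~\ref{rem: verifying mt elts satisfy abstract assumptions}, the weight-two Theorem~A analogue, and the integral case in Remark~\ref{rk: remark on maximality for modforms}) is already available; the proof is essentially an assembly. The only points demanding care are bookkeeping with the uniformizer $\varpi$ of $K_f$ in place of the $p$-adic valuation over $\Zp$, and making sure the single normalisation $\Omega_{f,\bullet}$ realising $\mu(L_p(f))=0$ is the one used throughout. The conceptual heart of the dichotomy being clean, rather than just a case split, is that Theorem~\ref{thm:abstract} spreads non-integrality at one level to all levels, so ``$\theta_n(f)\notin\cO_f[G_n]$ for some $n$'' and ``for all $n$'' coincide; this all-or-nothing behaviour is what I would flag as the substantive (already-proven) mechanism underlying the statement.
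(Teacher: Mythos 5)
Your proposal is correct and follows essentially the same route as the paper, which itself proves this theorem by declaring it "the analogue of Theorem~\ref{thm: classify}, obtained in the same way, combining the previous argument with Remark~\ref{rk: remark on maximality for modforms}." Your assembly — Theorem~\ref{thm:abstract} via Remark~\ref{rem: verifying mt elts satisfy abstract assumptions} to spread non-integrality to all levels in case~(b), Remark~\ref{rk: remark on maximality for modforms} for case~(a), and Theorem~\ref{thm: ThA for mod forms} to pin down the sign and value of $\ord_\varpi(L(f,1)/\Omega_{f,\bullet})$ — is exactly the intended argument.
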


\section{Examples and remarks on congruences with boundary symbols}\label{sec: remarks, examples}
In this section, we relate the maximality of $\lambda$-invariants of Mazur--Tate elements to certain congruences satisfied by the corresponding modular symbol. We begin with the following proposition, which is a reformulation of Theorem \ref{thm:abstract} in terms of a congruence condition on modular symbols at divisors of the form $\{\infty\}-\{a/p^n\}$ for $a\in (\ZZ/p^n\ZZ)^\times$ and $n\geq0$.
\begin{proposition}\label{lem: maxl}
For a positive integer $N$ and an odd prime $p$,  let $\phi \in \Symb(\Gamma_0(N),\Zp)$. Let $\ord_p(\phi(\{\infty\}-\{0\}))=m$.
If there exists a constant $\alpha\in \Zp^\times$ and an integer $t>m$ such that
\begin{equation}\label{eq:cong}
    \phi(\{\infty\}-\{{a}/{p^{n+1}}\}\equiv \alpha\phi(\{\infty\}-\{{a}/{p^{n}}\})  \mod{p^{t}}
\end{equation} 
 for all $a \in (\ZZ/p^n\ZZ)^\times$ and $n\geq 0$, then 
  \[\mu(\theta_n(\phi))=m\quad \text{ and }\quad \lambda(\theta_n(\phi))=p^n-1\]
  for all $n\geq 0$.
\end{proposition}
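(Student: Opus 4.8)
The plan is to re-run the proof of Theorem~\ref{thm:abstract} for the sequence $\{\theta_n(\phi)\}_{n\geq0}$, exploiting that the congruence \eqref{eq:cong} is strictly stronger than hypothesis~(2) of that theorem: it supplies divisibility by $p^t$ rather than mere integrality, and this is exactly what lets us handle the nonnegative $\mu$-invariant $m$ here. (Note that Theorem~\ref{thm:abstract}, whose conclusion requires a strictly negative $\mu$-invariant, does not apply verbatim, since \eqref{eq:cong} forces $\theta_n(\phi)\in\Zp[G_n]$.)

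First I would convert the pointwise congruence \eqref{eq:cong} into a congruence in the group ring. Reading \eqref{eq:cong} as holding for every integer $a$ coprime to $p$ and noting that $\alpha\phi(\{\infty\}-\{a/p^n\})=\alpha\bigl(\phi\mid\begin{psmallmatrix}p&0\\0&1\end{psmallmatrix}\bigr)(\{\infty\}-\{a/p^{n+1}\})$, I would sum over $a\in(\ZZ/p^{n+1}\ZZ)^\times$ weighted by $\sigma_a$ to get $\vartheta_{n+1}(\phi)\equiv\alpha\,\vartheta_{n+1}\bigl(\phi\mid\begin{psmallmatrix}p&0\\0&1\end{psmallmatrix}\bigr)\pmod{p^t}$. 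Projecting to $G_n$ and invoking Lemma~\ref{lem: corlambda}(1) then yields
\[
\theta_n(\phi)\equiv\alpha\,\cor^n_{n-1}(\theta_{n-1}(\phi))\pmod{p^t}\qquad(n\geq1),
\]
and iterating (using $\Zp$-linearity of corestriction and $\cor^n_{n-1}(p^t\Zp[G_{n-1}])\subseteq p^t\Zp[G_n]$) gives $\theta_n(\phi)\equiv\alpha^n\cor^n_0(\theta_0(\phi))\pmod{p^t}$ for all $n\geq0$.

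Next I would treat the base level. Since $G_0$ is trivial, $\theta_0(\phi)=\sum_{a\in(\ZZ/p\ZZ)^\times}\phi(\{\infty\}-\{a/p\})$. Applying \eqref{eq:cong} at $n=0$ together with the $\Gamma_0(N)$-equivalence $\{a\}\sim\{0\}$ for integers $a$ gives $\phi(\{\infty\}-\{a/p\})\equiv\alpha\phi(\{\infty\}-\{0\})\pmod{p^t}$, whence
\[
\theta_0(\phi)\equiv(p-1)\alpha\,\phi(\{\infty\}-\{0\})\pmod{p^t}.
\]
As $(p-1)\alpha\in\Zp^\times$ and $t>m$, this forces $\ord_p(\theta_0(\phi))=m$, i.e.\ $\mu(\theta_0(\phi))=m$ and $\lambda(\theta_0(\phi))=0$. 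Iterating Lemma~\ref{lem: corlambda}(2) then gives $\mu(\cor^n_0(\theta_0(\phi)))=m$ and $\lambda(\cor^n_0(\theta_0(\phi)))=p^n-1$, and multiplication by the unit $\alpha^n$ preserves both invariants. Finally, because $t>m$, passing from $\alpha^n\cor^n_0(\theta_0(\phi))$ to $\theta_n(\phi)$ modulo $p^t$ keeps the degree-$(p^n-1)$ coefficient at valuation $m$ and every lower-degree coefficient at valuation $>m$; hence both invariants transfer, giving $\mu(\theta_n(\phi))=m$ and $\lambda(\theta_n(\phi))=p^n-1$ for all $n\geq0$.

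The step requiring the most care is the first: justifying the passage from the pointwise congruence \eqref{eq:cong} to the group-ring congruence, in particular that $\phi(\{\infty\}-\{a/p^n\})$ depends only on $a\bmod p^n$ (so that the sum over $a\in(\ZZ/p^{n+1}\ZZ)^\times$ is compatible with Lemma~\ref{lem: corlambda}(1)), and that the degenerate level $n=0$ feeds correctly into the value of $\theta_0(\phi)$ at the single cusp $\{0\}$. Everything after that is the bookkeeping already carried out in the proof of Theorem~\ref{thm:abstract}.
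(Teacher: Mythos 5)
Your proof is correct and follows essentially the same route as the paper's: translate the pointwise congruence \eqref{eq:cong} into the group-ring congruence $\theta_n(\phi)\equiv\alpha\,\cor^n_{n-1}(\theta_{n-1}(\phi))\pmod{p^t}$ via Lemma~\ref{lem: corlambda}(1), iterate down to $\theta_0(\phi)\equiv(p-1)\alpha\,\phi(\{\infty\}-\{0\})$, and conclude with Lemma~\ref{lem: corlambda}(2). Your explicit justification of the final step --- that a congruence modulo $p^t$ with $t>m$ transfers both invariants because $\lambda=p^n-1$ forces every coefficient below the top degree to have valuation strictly greater than $m$ --- is a welcome elaboration of what the paper leaves implicit.
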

\begin{proof}
    The condition \eqref{eq:cong} is equivalent to 
    \[\phi(\{\infty\}-\{{a}/{p^n}\}) \equiv \alpha \phi\mid\begin{psmallmatrix}
        p & 0 \\ 0 & 1
    \end{psmallmatrix}(\{\infty\}-\{{a}/{p^n}\}) \mod{p^t}\] 
    for all $a \in (\ZZ/p^n\ZZ)^\times$ and $n\geq 1$.
    Since $\theta_n(\phi)$ is determined by the values of $\phi$ on divisors of the form $\{\infty\}-\{a/p^n\}$, we apply Lemma~\ref{lem: corlambda}(1) to deduce
    \[\theta_n(\phi)\equiv \alpha\cdot\cor_{n-1}^{n}(\theta_{n-1}(\phi))\equiv \cdots \equiv \alpha^n\cdot\cor^n_0(\theta_0(\phi)) \mod{p^t}.\]
    But 
    $$
    \theta_0(\phi)= \sum_{a=1}^{p-1} \phi(\{\infty\}-\{a/p\}) \sigma_1 \overset{\eqref{eq:cong}}{\equiv} \alpha \sum_{a=1}^{p-1} \phi(\{\infty\}-\{a\}) \sigma_1 =\alpha (p-1)\cdot\phi(\{\infty\}-\{0\})\cdot \sigma_1 \not \equiv 0 \mod{p^{t}}
    $$
    by hypothesis. The proposition now follows from Lemma~\ref{lem: corlambda}(2).
\end{proof}
Ihara's lemma, in the form described in \cite[Theorem 3.5]{PW},  for a Hecke eigensymbol $\varphi_f \in \Symb(\Gamma_0(N),\Zp)$ (corresponding to a weight 2 eigenform $f$) implies the following:\
if, for some constant $c_0\in \Zp^\times$,
\begin{equation}\label{eq: ihara}
    \varphi_f\equiv c_0\varphi_f|\begin{psmallmatrix}
    p & 0 \\ 0 & 1
\end{psmallmatrix}\mod{p}
\end{equation}
then $f$ is congruent modulo $p$ to an Eisenstein series of weight $2$. 
Note that if \eqref{eq: ihara} holds and $\varphi_f(\{\infty\}-\{0\})\in \Zp^\times$, Proposition \ref{lem: maxl} implies that 
\[\lambda(\theta_n(f))=p^n-1 \text{ for all } n\geq0,\]
where the modular symbols are normalised by the cohomological periods $\Omega_f^\pm$ associated with $f$.

\subsection{Congruences with boundary symbols}\label{sec:boundary}
We give a brief review on boundary symbols, following \cite{bel-das}. Let  $R$ be a commutative ring. 
We have the exact sequence 
    \begin{equation}\label{eqn:longcohom}
        0 \xrightarrow{} R^\Gamma \xrightarrow{} \text{Hom}_{\Gamma}(\dDelta,R) \xrightarrow{b} \Symb(\Gamma, R) \xrightarrow{h} {H}^1(\Gamma,R).
    \end{equation}
\begin{defn}\label{defn: bsym}
    The map $b$ in \eqref{eqn:longcohom} is called the \textbf{boundary map} and its image, denoted by $\BSymb(\Gamma, R)$, is called the module of \textbf{boundary modular symbols} (or simply \textbf{boundary symbols}). We denote the space of weight $2$ boundary symbols by $\BSymb(\Gamma, R)$.
\end{defn}
The exact sequence (\ref{eqn:longcohom}) yields an isomorphism of Hecke-modules $$\text{BSymb}(\Gamma, R) \cong \text{Hom}_{\Gamma} (\dDelta, R)/ R^\Gamma,$$
relating modular symbols to boundary symbols.
Furthermore, there is a short exact sequence
$$0 \to \text{BSymb}_\Gamma(R) \to \Symb(\Gamma,R) \to H^1(\Gamma, R).$$
This space of boundary symbols can be identified with the space of weight $2$ Eisenstein series under the Eichler--Shimura isomorphism (see Remark \ref{rk:boundary} below and note that a notion of modular symbols that is dual to the one discussed here is utilized therein). These symbols can be considered as $\Gamma$-invariant maps on the set of divisors $\dDelta$, greatly simplifying the computation.
\begin{remark}\label{rk:boundary}(\textbf{On the Eichler--Shimura isomorphism})
The space of (weight 2) modular symbols $\Symb(\Gamma,\CC)=\Hom_{\Gamma}(\dDelta, \CC)$ can be identified with the direct sum $S_2(\Gamma)\oplus S^{\mathrm{anti}}_2(\Gamma)\oplus E_2(\Gamma)$, where $S_2(\Gamma),S^{\mathrm{anti}}_2(\Gamma)$ and $E_2(\Gamma)$ denote the $\CC$-vector space of holomorphic cusp forms, anti-holomorphic cusp forms and Eisenstein series, respectively. This is the \emph{Eichler--Shimura isomorphism}. In terms of the cohomology of the modular curve for $\Gamma=\Gamma_0(N)$, we have the following exact sequence of Hecke-modules  
    \[0\to \BSymb(\Gamma_0(N),\CC) \to H^1_c(Y_0(N),\CC) \xrightarrow[]{} H^1(Y_0(N),\CC)\to E_{2}(\Gamma)\to 0 \]
    (see \cite[Proposition 2.5]{bel-das}). Recall that there is a canonical isomorphism $\Symb(\Gamma_0(N), \CC) \cong H^1_c(Y_0(N),\CC)$. Thus, $\BSymb(\Gamma,\CC)$ is the summand corresponding to $E_2(\Gamma)$ inside $\Symb(\Gamma,\CC)$ and can be identified with the image of $\Hom(\dDelta,\CC)$ inside $\Hom_{\Gamma}(\dDelta^0, \CC)$ (via the map $\dDelta\to\dDelta^0$ given by restricting to degree-zero divisors).
\end{remark}

Let $\phi_{E,\mathrm{Coh}}$ denote the modular symbol attached to an elliptic curve $E$ normalised by the cohomological periods of $E$ for a given prime $p$, i.e.,
\[\phi_{E,\mathrm{Coh}} \colonequals \frac{\xi^+_{f_E}}{\Omega^+_{f_E}}+  \frac{\xi^-_{f_E}}{\Omega^-_{f_E}}.\]
We prove a straightforward consequence of the results from previous sections. 
\begin{proposition}\label{thm: bsym to Lval}
    Let $E$ be an elliptic curve over $\QQ$ with good ordinary reduction at an odd prime $p$ and conductor $N_E$. Assume $L(E,1)/\Omega_{f_E}\in \Zp^\times$. Suppose $\phi_{E,\mathrm{Coh}}^+$ is congruent modulo $p$ to a boundary symbol $\phi_B \in \BSymb(\Gamma_0(N_E),\Zp)$ of level $\Gamma_0(N_E)$. Then 
    \[\lambda(\theta_n(E))=p^n-1\  \forall \ n\geq 0.\] 
    Furthermore, if $\mu(L_p(E))=0$, we have
    \[\ord_p(L(E,1)/\Omega_E)<0.\]
\end{proposition}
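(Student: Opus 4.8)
The plan is to deduce maximality by verifying the hypotheses of Proposition~\ref{lem: maxl} for the modular symbol $\phi_{E,\mathrm{Coh}}^+$, transferring the conclusion to $\theta_n(E)$, and then obtaining the final inequality directly from Theorem~\ref{thm: converse}. By the defining property of the cohomological periods, $\phi_{E,\mathrm{Coh}}^+$ lies in $\Symb(\Gamma_0(N_E),\Zp)$, so the proposition is applicable once we produce the required congruence with a unit value at $\{\infty\}-\{0\}$.

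The key geometric input is that the boundary symbol is \emph{constant} along the relevant divisors. Since $\phi_B \in \BSymb(\Gamma_0(N_E),\Zp)$, it is the image under the boundary map $b$ of a $\Gamma_0(N_E)$-invariant homomorphism $g \in \Hom_{\Gamma_0(N_E)}(\dDelta,\Zp)$, so that $\phi_B(\{s\}-\{r\}) = g(\{s\})-g(\{r\})$ and $g(\{c\})$ depends only on the $\Gamma_0(N_E)$-orbit of the cusp $c$. Because $E$ has good reduction at $p$ we have $p \nmid N_E$, and for every $a$ with $p \nmid a$ the cusp $a/p^n$ is $\Gamma_0(N_E)$-equivalent to $0$: the coprimality $\gcd(p^n, aN_E) = 1$ lets one solve $x p^n - a N_E y = 1$, producing a matrix $\begin{psmallmatrix} x & a \\ N_E y & p^n \end{psmallmatrix} \in \Gamma_0(N_E)$ carrying $0$ to $a/p^n$. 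Hence $\phi_B(\{\infty\}-\{a/p^{n+1}\}) = \phi_B(\{\infty\}-\{a/p^n\}) = g(\{\infty\})-g(\{0\})$ for all $n \ge 0$ and all $a \in (\ZZ/p^n\ZZ)^\times$.

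Feeding the congruence $\phi_{E,\mathrm{Coh}}^+ \equiv \phi_B \pmod p$ into this constancy gives, with $\alpha = 1$ and $t = 1$,
\[
\phi_{E,\mathrm{Coh}}^+(\{\infty\}-\{a/p^{n+1}\}) \equiv \phi_B(\{\infty\}-\{a/p^n\}) \equiv \phi_{E,\mathrm{Coh}}^+(\{\infty\}-\{a/p^n\}) \pmod p,
\]
which is precisely \eqref{eq:cong}. Moreover $\phi_{E,\mathrm{Coh}}^+(\{\infty\}-\{0\}) = L(E,1)/\Omega_{f_E}$ (the divisor $\{\infty\}-\{0\}$ being fixed by $\iota$), which is a $p$-adic unit by hypothesis, so $m = 0 < 1 = t$. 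Proposition~\ref{lem: maxl} then yields $\lambda(\theta_n(\phi_{E,\mathrm{Coh}}^+)) = p^n-1$ for all $n \ge 0$. To pass to $\theta_n(E)$, note $\phi_E^+ = (\Omega_{f_E}/\Omega_E)\,\phi_{E,\mathrm{Coh}}^+$, and that after projecting from $\cG_{n+1}$ to $G_n$ the odd part $\phi_E^-$ contributes nothing: since $-1$ lies in the tame kernel of $\cG_{n+1}\to G_n$, each tame orbit pairs $a$ with $-a$ and the odd values cancel (as in the computation of Remark~\ref{rem: theta0}). Thus $\theta_n(E) = (\Omega_{f_E}/\Omega_E)\,\theta_n(\phi_{E,\mathrm{Coh}}^+)$, and because $\lambda$ is unchanged under scaling by the nonzero constant $\Omega_{f_E}/\Omega_E$, we conclude $\lambda(\theta_n(E)) = p^n-1$ for all $n$.

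For the final assertion, assuming $\mu(L_p(E)) = 0$, the maximality $\lambda(\theta_n(E)) = p^n-1$ just established (in particular for $n \gg 0$) together with Theorem~\ref{thm: converse} gives $\ord_p(L(E,1)/\Omega_E) < 0$. The only genuinely delicate step is the constancy of the boundary symbol on the divisors $\{\infty\}-\{a/p^n\}$, which rests on the cusp equivalence $a/p^n \sim_{\Gamma_0(N_E)} 0$ and hence on $p \nmid N_E$; the remaining work is routine bookkeeping with the periods and the $\iota$-eigenspace decomposition.
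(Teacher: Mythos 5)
Your proof is correct and follows essentially the same route as the paper: use the $\Gamma_0(N_E)$-equivalence of the cusps $a/p^n$ (valid since $p\nmid N_E$) to see that the boundary symbol is constant on the divisors $\{\infty\}-\{a/p^n\}$, deduce the congruence \eqref{eq:cong} with $\alpha=1$, $t=1$ and $m=0$, apply Proposition~\ref{lem: maxl}, and then invoke Theorem~\ref{thm: converse} for the last assertion. You have merely made explicit two steps the paper leaves implicit (the matrix realizing the cusp equivalence, and the passage from $\theta_n(\phi_{E,\mathrm{Coh}}^+)$ to $\theta_n(E)$ via rescaling and the vanishing of the odd contribution), both of which are handled correctly.
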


\begin{proof}
 The congruence between $\phi_{E,\mathrm{Coh}}^+$ with a weight 2 boundary symbol and the fact that divisors of the form $\{a/p^n\}$ and $\{a/p^{n-1}\}$ are $\Gamma_0(N_E)$-equivalent imply the congruence 
    \[\phi_{E,\mathrm{Coh}}^{+} (\{\infty\}-\{a/p^n\}) \equiv \phi_{E,\mathrm{Coh}}^{+} (\{\infty\}-\{a/p^{n-1}\})\mod p\] for all $n\geq 1$. The first assertion of the proposition follows from Proposition \ref{lem: maxl}, whereas the second assertion is a consequence of Theorem \ref{thm: converse}. 
\end{proof}

This illustrates one way in which case (b) of Theorem \ref{thm: classify} occurs. Interestingly, all the examples generated via our computations having $\lambda(\theta_n(E))=p^n-1$ for a good ordinary prime $p>2$ satisfy the hypotheses of Proposition \ref{thm: bsym to Lval}.
\subsection{Examples}\label{ssec: examples}
\begin{example}\label{example1}
    Let $E=$\href{https://www.lmfdb.org/EllipticCurve/Q/26b1/}{26b1} and $p=7$. It has a non-trivial torsion point of order $7$ over $\QQ$. From LMFDB, we see that $L(E,1)/\Omega_E=1/7$. It follows from Theorem \ref{thm: Lvaldenom-version2} that
    \[\lambda(\theta_n(E))=p^n-1\] for $n \geq 0$. 
    Furthermore, we see that $\phi_{E,\mathrm{Coh}}^+$ is congruent modulo $p$ to a boundary symbol (taking values in $\Fp$) defined as 
    \[\psi(\{r\})=\begin{cases}
        -1/2 & {\text{ if } r \in \Gamma_0(26)\{0\}\cup \Gamma_0(26)\{1/13\}}\\
        1/2 & {\text{ if } r \in \Gamma_0(26)\{\infty\}\cup \Gamma_0(26)\{1/2\}}
    \end{cases}\]
    for $r \in \QQ$. Here $\Gamma_0(26)\{c\}$ denotes the equivalence class of the cusp $c$. 
\end{example}    
We have found examples of elliptic curves with additive reduction at $p$ where $\lambda(\theta_n(E))$ is maximal. Interestingly, in all of these examples, we observe that $\ord_p(L(E,1)/\Omega_E)<0$. 
\begin{example}\label{example: 50b}
    Let $E=$\href{https://www.lmfdb.org/EllipticCurve/Q/50b1/}{50b1} and $p=5$. This curve admits a non-trivial $5$-torsion over $\QQ$ and $\phi_{E,\mathrm{Coh}}(\{\infty\}-\{0\})=1$. We have $L(E,1)/\Omega_E=1/5$.
    We observe, for $n\geq 0$, 
    \[\lambda(\theta_{n}(E))=p^n-1.\]
    Moreover, $\phi_{E,\mathrm{Coh}}^+$ is indeed congruent to a weight 2 boundary symbol on $\Gamma_0(50)$ modulo $5$.
\end{example}
\begin{remark}
  Note that when $E$ has $p$-torsion over $\QQ$, the mod $p$ representation is reducible and one can exhibit an Eisenstein series of weight $2$ having Hecke eigenvalues congruent modulo $p$ to those of $E$. In all the examples that exhibit the maximality of $\lambda(\theta_n(E))$ with $E$ semistable and $p$ a prime of good ordinary reduction that we have found, the hypotheses of \cite[Theorem 1.5.1]{WAKE2021107543} can be verified. So, it follows that, in each of these examples, the relevant Eisenstein completion of the Hecke algebra is a Gorenstein ring. 
  We expect that this implies that the space of $\Fp$-valued modular symbols with Hecke eigenvalues congruent to those of $E$ is one dimensional, implying that there is a congruence with a boundary symbol. 
\end{remark}
We end with another example to demonstrate a consequence of Proposition \ref{thm: bsym to Lval} in this direction.
\begin{example} \label{example: 174b}
    Let $E=$\href{https://www.lmfdb.org/EllipticCurve/Q/174b1/}{174b1} and $p=7$. This curve has $\mu(L_p(E))=0$, and $L(E,1)/\Omega_{f_E}\in \Zp^\times$. The curve $E$ admits a non-trivial $7$-torsion point over $\QQ$, which implies that $E$ is congruent to a weight 2 Eisenstein series of level $\Gamma_0(174)$ modulo $7$. However, we have $L(E,1)/\Omega_E=1$, so Proposition~\ref{thm: bsym to Lval} tells us that $\phi_E$ cannot be congruent to a weight 2 boundary symbol on $\Gamma_0(174)$ modulo $7$. This implies that mod $p$ multiplicity one fails in this case. 
\end{example}
\bibliographystyle{amsalpha}
\bibliography{references}

\providecommand{\bysame}{\leavevmode\hbox to3em{\hrulefill}\thinspace}
\providecommand{\MR}{\relax\ifhmode\unskip\space\fi MR }
\providecommand{\MRhref}[2]{%
  \href{http://www.ams.org/mathscinet-getitem?mr=#1}{#2}
}
\providecommand{\href}[2]{#2}
\begin{thebibliography}{WWE21}

\bibitem[AS86]{ash-ste}
Avner Ash and Glenn Stevens, \emph{Modular forms in characteristic {$l$} and special values of their {$L$}-functions}, Duke Math. J. \textbf{53} (1986), no.~3, 849--868.

\bibitem[BD15]{bel-das}
Jo\"{e}l Bella\"{\i}che and Samit Dasgupta, \emph{The {$p$}-adic {$L$}-functions of evil {E}isenstein series}, Compos. Math. \textbf{151} (2015), no.~6, 999--1040.

\bibitem[DL21]{doyon-lei}
Anthony Doyon and Antonio Lei, \emph{Congruences between {R}amanujan's tau function and elliptic curves, and {M}azur-{T}ate element at additive primes}, Ramanujan J. \textbf{58} (2021), 505--522.

\bibitem[DL24]{doyon-lei2}
\bysame, \emph{Lambda-invariants of {M}azur-{T}ate elements attached to {R}amanujan's tau function and congruences with {E}isenstein series}, Res. Number Theory \textbf{10} (2024), no.~2, Paper No. 53, 18.

\bibitem[Edi92]{edixhoven}
Bas Edixhoven, \emph{The weight in {S}erre's conjectures on modular forms}, Invent. Math. \textbf{109} (1992), no.~3, 563--594.

\bibitem[Gre99]{Greenberg}
Ralph Greenberg, \emph{Iwasawa theory for elliptic curves}, Arithmetic theory of elliptic curves ({C}etraro, 1997), Lecture Notes in Math., vol. 1716, Springer, Berlin, 1999, pp.~51--144.

\bibitem[LPP25]{LLP-additive}
Antonio Lei, Robert Pollack, and Naman Pratap, \emph{{On the {I}wasawa Invariants of {M}azur--{T}ate elements of elliptic curves at additive primes}}, arXiv preprint arXiv:2412.16629v2 (2025).

\bibitem[Maz77]{Mazur77}
Barry Mazur, \emph{Modular curves and the {Eisenstein} ideal}, Publications Math\'ematiques de l'IH\'ES \textbf{47} (1977), 33--186.

\bibitem[MR23]{MR4669286}
Barry Mazur and Karl Rubin, \emph{Arithmetic conjectures suggested by the statistical behavior of modular symbols}, Exp. Math. \textbf{32} (2023), no.~4, 657--672.

\bibitem[MSD74]{MSD74}
Barry Mazur and Peter Swinnerton-Dyer, \emph{Arithmetic of {W}eil curves}, Invent. Math. \textbf{25} (1974), 1--61.

\bibitem[MT87]{MT}
Barry Mazur and John Tate, \emph{Refined conjectures of the ``{B}irch and {S}winnerton-{D}yer type''}, Duke Math. J. \textbf{54} (1987), no.~2, 711--750.

\bibitem[Pol05]{pollack05}
Robert Pollack, \emph{An algebraic version of a theorem of {K}urihara}, J. Number Theory \textbf{110} (2005), no.~1, 164--177.

\bibitem[PW11]{PW}
Robert Pollack and Tom Weston, \emph{Mazur-{T}ate elements of nonordinary modular forms}, Duke Math. J. \textbf{156} (2011), no.~3, 349--385.

\bibitem[Wut14]{wuthrichint}
Christian Wuthrich, \emph{{On the integrality of modular symbols and Kato's Euler system for elliptic curves}}, Documenta Mathematica \textbf{19} (2014), 381--402.

\bibitem[WWE21]{WAKE2021107543}
Preston Wake and Carl Wang-Erickson, \emph{The {E}isenstein ideal with squarefree level}, Advances in Mathematics \textbf{380} (2021), 107543.

\end{thebibliography}
\end{document}